\tikzset{vertex/.style={circle,draw,fill,inner sep=0pt,minimum size=1mm}}
\newcommand{\yzgrid}[5] % params are (x, miny, maxy, minz, maxz)
{
  \foreach \y in {#2,...,#3} {
      \draw[white,line width=3pt] (\y,#4,#1) -- (\y,#5,#1);
  }
  \foreach \z in {#4,...,#5} {
      \draw[white,line width=3pt] (#2,\z,#1) -- (#3,\z,#1);
      }
  \foreach \y in {#2,...,#3} {
      \draw[densely dotted] (\y,#4,#1) -- (\y,#5,#1);
  }
  \foreach \z in {#4,...,#5} {
      \draw[densely dotted] (#2,\z,#1) -- (#3,\z,#1);
      }
}
\newcommand{\xyzgrid}[6]
{ 
\foreach \x in {#1,...,#2} {
  \foreach \y in {#3,...,#4} {
      \draw [densely dotted] (\y, #5, \x) -- (\y, #6, \x);
  }
}
\foreach \y in {#3,...,#4} {
    \foreach \z in {#5,...,#6} {
      \draw [densely dotted] (\y, \z, #1) -- (\y, \z, #2);
    }
}
\foreach \x in {#1,...,#2} {
    \foreach \z in {#5,...,#6} {
      \draw [densely dotted] (#3, \z, \x) -- (#4, \z, \x);
    }
}
}
\theoremstyle{plain}
\newtheorem{thm}{Theorem}
\newtheorem{lem}[thm]{Lemma}
\newtheorem{prop}[thm]{Proposition}
\newtheorem{cor}[thm]{Corollary}
\theoremstyle{definition}
\newtheorem{definition}[thm]{Definition}
\newtheorem{exl}[thm]{Example}
\newtheorem{question}[thm]{Open Question}
\numberwithin{thm}{section}
\newcommand{\adj}{\leftrightarrow}
\newcommand{\adjeq}{\leftrightarroweq}
\def\Z{{\mathbb Z}}
\begin{document}
\title{Fundamental Groups and Euler Characteristics of Sphere-like Digital Images}
\author{Laurence Boxer
         \thanks{
    Department of Computer and Information Sciences,
    Niagara University,
    Niagara University, NY 14109, USA;
    and Department of Computer Science and Engineering,
    State University of New York at Buffalo.
    E-mail: boxer@niagara.edu
    }
    \and{P. Christopher Staecker
    \thanks{
    Department of Mathematics,
    Fairfield University,
    Fairfield, CT 06823-5195, USA.
    E-mail: cstaecker@fairfield.edu
    }
    }
}
\date{ }
\maketitle

\begin{abstract}
The current paper focuses on fundamental groups and Euler characteristics of various digital models of the 2-dimensional sphere. For all models that we consider, we show that the fundamental groups are trivial, and compute the Euler characteristics (which are not always equal). We consider the connected sum of digital surfaces and investigate how this operation relates to the fundamental group and Euler characteristic. We also consider two related but different notions of a digital image having ``no holes,'' and relate this to the triviality of the fundamental group.

Many of our results have origins in the paper~\cite{Han07} by S.-E. Han, which contains many errors. We correct these errors when possible, and leave some open questions. We also present some original results.

%The paper~\cite{Han07} by S.-E. Han is largely concerned with the
%fundamental groups and Euler characteristics of digital images in $\Z^3$ that 
%model Euclidean 2-dimensional spheres. Han's paper has 
%many errors, correction of which is the primary
%goal of the current paper.

Key words and phrases: digital topology, digital image, fundamental group, Euler characteristic
\end{abstract}

\section{Introduction}
A digital image is a graph that models an
object in a Euclidean space. In digital 
topology we study properties of digital
images analogous to the geometric and 
topological properties of the objects in 
Euclidean space that the images model. Among
these properties are digital versions of the
fundamental group and the Euler characteristic. 
The current paper focuses on fundamental groups and Euler characteristics of various digital models of the 2-dimensional sphere.

Most of our results were explored by Han in \cite{Han07}, where many errors appear. We correct almost all of these errors, leaving open some questions, and also obtain some new results. Many of the errors in \cite{Han07} result from inattention to basepoint preservation in homotopies of loops. The difference between pointed and unpointed homotopy turns out to be complex, and must be carefully considered. This issue has been explored in \cite{Staecker-etal} and \cite{BoSt}, and we continue that work in this paper. In particular, Example~\ref{contract-not-ptd} shows that contractibility does not imply pointed contractibility.
Errors also appear in the discussion of Euler characteristics in~\cite{Han07}. We correct these, many of which seem due to simple counting mistakes.

%These 
%properties in particular are discussed in~\cite{Han07}, where
%many errors appear.
%It should be noted that reviewers as well as authors
%bear some responsibility for such errors.
%The current paper is largely
%concerned with correcting these errors. We also obtain
%some new results; e.g., Example~\ref{contract-not-ptd} shows that
%contractibility does not imply pointed contractible.

\section{Preliminaries}
\subsection{Fundamentals of digital topology}
\label{prelims}
Much of this section is quoted or paraphrased from other
papers in digital topology, such as~\cite{Boxer99,Boxer05,BoSt16}.

We will assume familiarity with the topological theory of digital images. See, e.g., \cite{Boxer94} for the standard definitions. All digital images $X$ are assumed to carry their own adjacency relations (which may differ from one image to another). When we wish to emphasize the particular adjacency relation we write the image as $(X,\kappa)$, where $\kappa$ represents
the adjacency relation.

Among the commonly used adjacencies are the $c_u$-adjacencies.
Let $x,y \in \Z^n$, $x \neq y$. Let $u$ be an integer,
$1 \leq u \leq n$. We say $x$ and $y$ are $c_u$-adjacent if
\begin{itemize}
\item There are at most $u$ indices $i$ for which 
      $|x_i - y_i| = 1$.
\item For all indices $j$ such that $|x_j - y_j| \neq 1$ we
      have $x_j=y_j$.
\end{itemize}
We often label a $c_u$-adjacency by the number of points
adjacent to a given point in $\Z^n$ using this adjacency.
E.g.,
\begin{itemize}
\item In $\Z^1$, $c_1$-adjacency is 2-adjacency.
\item In $\Z^2$, $c_1$-adjacency is 4-adjacency and
      $c_2$-adjacency is 8-adjacency.
\item In $\Z^3$, $c_1$-adjacency is 6-adjacency,
      $c_2$-adjacency is 18-adjacency, and $c_3$-adjacency
      is 26-adjacency.
\end{itemize}

\begin{definition}
\label{pathPtSet}
A subset $Y$ of a digital image $(X,\kappa)$ is
{\em $\kappa$-connected}~\cite{Rosenfeld},
or {\em connected} when $\kappa$
is understood, if for every pair of points $a,b \in Y$ there
exists a sequence $P=\{y_i\}_{i=0}^m \subset Y$ such that
$a=y_0$, $b=y_m$, and $y_i$ and $y_{i+1}$ are 
$\kappa$-adjacent for $0 \leq i < m$. $P$ is
then called a {\em path} from $a$ to $b$ in 
$Y$.
\end{definition}

The following generalizes a definition of
~\cite{Rosenfeld}.

\begin{definition}\label{continuous}
{\rm ~\cite{Boxer99}}
Let $(X,\kappa)$ and $(Y,\lambda)$ be digital images. A function
$f: X \rightarrow Y$ is $(\kappa,\lambda)$-continuous if for
every $\kappa$-connected $A \subset X$ we have that
$f(A)$ is a $\lambda$-connected subset of $Y$. 
\end{definition}

When the adjacency relations are understood, we will simply say that $f$ is \emph{continuous}. Continuity can be reformulated in terms of adjacency of points:
\begin{thm}
{\rm ~\cite{Rosenfeld,Boxer99}}
A function $f:X\to Y$ is continuous if and only if, for any adjacent points $x,x'\in X$, the points $f(x)$ and $f(x')$ are equal or adjacent. \qed
\end{thm}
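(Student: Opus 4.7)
The plan is to prove both directions directly from Definition \ref{continuous} and the path-based definition of $\kappa$-connectedness in Definition \ref{pathPtSet}.

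For the forward direction, I will suppose $f$ is $(\kappa,\lambda)$-continuous and take adjacent points $x,x'\in X$. The two-point set $A=\{x,x'\}$ is $\kappa$-connected, since the length-1 sequence $x,x'$ is a $\kappa$-path from $x$ to $x'$. By continuity, $f(A)=\{f(x),f(x')\}$ is $\lambda$-connected. But any two-point set $\{p,q\}$ is $\lambda$-connected precisely when $p=q$ or $p$ and $q$ are $\lambda$-adjacent (otherwise no path within the set could connect them, since any path must consist of points of the set). This gives the conclusion.

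For the reverse direction, I will assume the adjacency condition and take an arbitrary $\kappa$-connected $A\subset X$. To show $f(A)$ is $\lambda$-connected, pick any two points of $f(A)$; they have the form $f(x)$ and $f(x')$ for some $x,x'\in A$. Since $A$ is $\kappa$-connected, there is a $\kappa$-path $x=y_0,y_1,\dots,y_m=x'$ in $A$. Applying the hypothesis to each consecutive pair $y_i,y_{i+1}$, the images $f(y_i)$ and $f(y_{i+1})$ are either equal or $\lambda$-adjacent. Deleting repeated consecutive entries from the sequence $f(y_0),\dots,f(y_m)$ yields a $\lambda$-path in $f(A)$ from $f(x)$ to $f(x')$, so $f(A)$ is $\lambda$-connected.

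Neither direction presents a real obstacle; the only subtle point is handling the ``equal or adjacent'' case in the reverse direction, which is resolved by the standard device of collapsing runs of equal consecutive entries to produce a genuine path in the sense of Definition \ref{pathPtSet}. The rest is a direct unpacking of the definitions.
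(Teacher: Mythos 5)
Your proof is correct and is the standard argument; the paper itself states this result with a citation to Rosenfeld and Boxer and gives no proof, so there is nothing to diverge from. Both directions are handled properly, including the one genuinely delicate point (collapsing runs of equal consecutive images so that the resulting sequence is a path in the sense of Definition~\ref{pathPtSet}).
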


See also~\cite{Chen94,Chen04}, where similar notions are referred to as {\em immersions}, {\em gradually varied operators},
and {\em gradually varied mappings}.

%For two subsets $A,B\subset X$, we will say that $A$ and $B$ are \emph{adjacent} when there exist points $a\in A$ and $b\in B$ such that $a$ and $b$ are equal or adjacent. Thus sets with nonempty intersection are automatically adjacent, while disjoint sets may or may not be adjacent. It is easy to see that a union of connected adjacent sets is connected. 

It is perhaps unfortunate that ``path'' is also used
with a meaning that is related to but distinct from the above. We will also use the following.

\begin{definition} (See {\rm \cite{Khalimsky}}.)
\label{dig-loop}
A {\em $\kappa-$path} in a digital image $X$ is
a $(2,\kappa)-$continuous function 
$f: [0,m]_{{\Z}} \rightarrow X$. If, further, $f(0) = f(m)$, we call
$f$ a {\em digital $\kappa-$loop}, and
the point $f(0)$ is the {\em basepoint} of the loop $f$.
If $f$ is a constant function, it is called a {\em trivial loop}. $\Box$
\end{definition}

Other terminology we use includes the following.
Given a digital image $(X,\kappa) \subset \Z^n$ and $x \in X$, the set of points adjacent to $x \in \Z^n$, the
neighborhood of $x$ in $\Z^n$, and the boundary of $X$
in $\Z^n$ are, respectively,
\[N_{\kappa}(x) = \{y \in \Z^n \, | \, y \mbox{ is }
    \kappa\mbox{-adjacent to }x\},\]
\[N_{\kappa}^*(x) = N_{\kappa}(x) \cup \{x\},
\]
and
\[ \delta_{\kappa}(X)=\{y \in X \, | \, 
    N_{\kappa}(y) \setminus X \neq \emptyset \}.
\]

\subsection{Digital homotopy}
Material appearing in this section is largely quoted
or paraphrased from other papers in digital topology.
See, e.g.,~\cite{BoSt1}.

A homotopy between continuous functions may be thought of as
a continuous deformation of one of the functions into the 
other over a finite time period.

\begin{definition}{\rm (\cite{Boxer99}; see also \cite{Khalimsky})}
\label{htpy-2nd-def}
Let $X$ and $Y$ be digital images.
Let $f,g: X \rightarrow Y$ be $(\kappa,\kappa')$-continuous functions.
Suppose there is a positive integer $m$ and a function
$F: X \times [0,m]_{{\Z}} \rightarrow Y$
such that

\begin{itemize}
\item for all $x \in X$, $F(x,0) = f(x)$ and $F(x,m) = g(x)$;
\item for all $x \in X$, the induced function
      $F_x: [0,m]_{{\Z}} \rightarrow Y$ defined by
          \[ F_x(t) ~=~ F(x,t) \mbox{ for all } t \in [0,m]_{{\Z}} \]
          is $(2,\kappa')-$continuous. That is, $F_x(t)$ is a path in $Y$.
\item for all $t \in [0,m]_{{\Z}}$, the induced function
         $F_t: X \rightarrow Y$ defined by
          \[ F_t(x) ~=~ F(x,t) \mbox{ for all } x \in  X \]
          is $(\kappa,\kappa')-$continuous.
\end{itemize}
Then $F$ is a {\rm digital $(\kappa,\kappa')-$homotopy between} $f$ and
$g$, and $f$ and $g$ are {\rm digitally $(\kappa,\kappa')-$homotopic in} $Y$.
If for some $x \in X$ we have $F(x,t)=F(x,0)$ for all
$t \in [0,m]_{{\Z}}$, we say $F$ {\rm holds $x$ fixed}, and $F$ is a {\rm pointed homotopy}.
$\Box$
\end{definition}

We denote a pair of homotopic functions as
described above by $f \simeq_{\kappa,\kappa'} g$.
When the adjacency relations $\kappa$ and $\kappa'$ are understood in context,
we say $f$ and $g$ are {\em digitally homotopic}
to abbreviate ``digitally 
$(\kappa,\kappa')-$homotopic in $Y$," and write
$f \simeq g$.

\begin{prop}
\label{htpy-equiv-rel}
{\rm ~\cite{Khalimsky,Boxer99}}
Digital homotopy is an equivalence relation among
digitally continuous functions $f: X \rightarrow Y$.
$\Box$
\end{prop}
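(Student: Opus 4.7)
The plan is to verify the three defining properties of an equivalence relation by constructing, in each case, an explicit homotopy and checking that it satisfies the three bullet conditions of Definition~\ref{htpy-2nd-def}.

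For reflexivity, I would show $f \simeq_{\kappa,\kappa'} f$ by taking $m=1$ and defining the constant homotopy $F\colon X \times [0,1]_{\Z} \to Y$ by $F(x,t) = f(x)$. Each induced path $F_x$ is constant, hence $(2,\kappa')$-continuous, and each slice $F_t$ equals $f$, which is $(\kappa,\kappa')$-continuous by hypothesis.

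For symmetry, given a homotopy $F\colon X \times [0,m]_{\Z} \to Y$ from $f$ to $g$, I would define the time-reversed map $G(x,t) = F(x, m-t)$. Clearly $G(x,0) = g(x)$ and $G(x,m) = f(x)$. The induced slices $G_t = F_{m-t}$ are continuous as maps $X \to Y$ since the $F_s$ are. The induced paths $G_x(t) = F_x(m-t)$ are $(2,\kappa')$-continuous because the map $t \mapsto m-t$ is an involution on $[0,m]_{\Z}$ preserving 2-adjacency, so composition with the path $F_x$ yields a path.

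For transitivity, suppose $f \simeq_{\kappa,\kappa'} g$ via $F\colon X \times [0,m]_{\Z} \to Y$ and $g \simeq_{\kappa,\kappa'} h$ via $G\colon X \times [0,n]_{\Z} \to Y$. I would concatenate these by defining $H\colon X \times [0,m+n]_{\Z} \to Y$ by
\[
H(x,t) = \begin{cases} F(x,t) & \text{if } 0 \leq t \leq m, \\ G(x,t-m) & \text{if } m \leq t \leq m+n. \end{cases}
\]
Well-definedness at $t=m$ follows from $F(x,m) = g(x) = G(x,0)$. Each slice $H_t$ equals either $F_t$ or $G_{t-m}$, hence is continuous. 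Each induced map $H_x$ agrees with $F_x$ on $[0,m]_{\Z}$ and with a translate of $G_x$ on $[m,m+n]_{\Z}$; on each piece it is a path, and the two pieces agree at $t=m$, so adjacent integers $t, t+1$ in $[0,m+n]_{\Z}$ always map to equal-or-adjacent points in $Y$, giving the required $(2,\kappa')$-continuity.

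There is no real obstacle here; the proof is a routine formal verification. The only mild subtlety is the bookkeeping at the concatenation point in transitivity, and ensuring that $(2,\kappa')$-continuity behaves properly under both reversal and concatenation of the time parameter. Both of these are immediate from the characterization of continuity via preservation of adjacency.
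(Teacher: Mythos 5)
Your proof is correct and is exactly the standard verification (constant homotopy for reflexivity, time reversal for symmetry, concatenation for transitivity) that the cited sources \cite{Khalimsky,Boxer99} use; the paper itself states this result without proof. All three constructions satisfy the continuity conditions of Definition~\ref{htpy-2nd-def} as you check, so there is nothing to add.
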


%Further, composition preserves homotopy:

%\begin{Proposition}
%\label{compose-htpy}
%{\rm ~\cite{Boxer99}}
%If $f_0, f_1: X \rightarrow Y$ are
%$(\kappa,\lambda)-$continuous functions with $f_0~\simeq_{\kappa,\lambda}~f_1$
% and $g_0, g_1: Y \rightarrow {\Z}$ are
%$(\lambda,\mu)-$continuous functions with
%$g_0~\simeq_{\lambda,\mu}~g_1$, then
%$g_0 \circ f_0~\simeq_{\kappa,\mu}~g_1 \circ f_1$.
%$\Box$
%\end{Proposition}

\begin{definition}
{\rm ~\cite{Boxer05}}
\label{htpy-type}
Let $f: X \rightarrow Y$ be a $(\kappa,\kappa')$-continuous function and let
$g: Y \rightarrow X$ be a $(\kappa',\kappa)$-continuous function such that
\[ f \circ g \simeq_{\kappa',\kappa'} 1_X \mbox{ and }
   g \circ f \simeq_{\kappa,\kappa} 1_Y. \]
Then we say $X$ and $Y$ have the {\rm same $(\kappa,\kappa')$-homotopy type}
and that $X$ and $Y$ are $(\kappa,\kappa')$-{\rm homotopy equivalent}, denoted 
$X \simeq_{\kappa,\kappa'} Y$ or as
$X \simeq Y$ when $\kappa$ and $\kappa'$ are
understood.
If for some $x_0 \in X$ and $y_0 \in Y$ we have
$f(x_0)=y_0$, $g(y_0)=x_0$,
and there exists a homotopy between $f \circ g$
and $1_X$ that holds $x_0$ fixed, and 
a homotopy between $g \circ f$
and $1_Y$ that holds $y_0$ fixed, we say
$(X,x_0,\kappa)$ and $(Y,y_0,\kappa')$ are
{\rm pointed homotopy equivalent} and that $(X,x_0)$ 
and $(Y,y_0)$ have the 
{\rm same pointed homotopy type}, denoted 
$(X,x_0) \simeq_{\kappa,\kappa'} (Y,y_0)$ or as
$(X,x_0) \simeq (Y,y_0)$ when 
$\kappa$ and $\kappa'$ are understood.
$\Box$
\end{definition}

It is easily seen, from 
Proposition~\ref{htpy-equiv-rel}, that having the
same homotopy type (respectively, the same
pointed homotopy type) is an equivalence relation
among digital images (respectively, among pointed
digital images).

For $p \in Y$, we denote by $\overline{p}$ the constant
function $\overline{p}:X \rightarrow Y$ defined by $\overline{p}(x) = p$ for
all $x \in X$.

\begin{definition}
\label{htpy-trivial}
A digital image $(X,\kappa)$ is
$\kappa$-{\rm contractible \cite{Khalimsky,Boxer94}}
if its identity map is $(\kappa, \kappa)$-homotopic to a 
constant function $\overline{p}$ for some $p \in X$.
If the homotopy of the contraction holds $p$ fixed,
we say $(X, p, \kappa)$ is {\rm pointed $\kappa$-contractible}.
$\Box$
\end{definition}

When $\kappa$ is understood, we speak of {\em contractibility}
%or {\em
%pointed contractibility}
for short.
It is easily seen that $X$
%(respectively, $(X,p)$) 
is contractible
%(respectively, pointed contractible) 
if and only if $X$
%(respectively, $(X,p)$) 
has the homotopy type 
%(respectively, pointed homotopy type)
of a one-point digital image.

The following is the first
example in the literature of a digital image that is
contractible but is not pointed contractible.

\begin{figure}
\begin{center}
\begin{tikzpicture} % MSS_6 # MSS_6
\xyzgrid{0}{2}{0}{2}{0}{0};

\foreach \x in {0,...,2} {
  \foreach \y in {0,2} {
    \foreach \z in {0,1} {
            \node at (\y,\z,\x) [vertex, fill=black] {};
}}}
\foreach \x in {0,...,2} {
  \foreach \y in {0,2} {
    \draw [densely dotted] (\y,0,\x) -- (\y,1,\x);
  }
}
\draw [densely dotted] (1,0,0) -- (1,1,0);
\draw [densely dotted] (1,0,2) -- (1,1,2);
\foreach \y in {0,2} {
  \draw [densely dotted] (\y,1,0) -- (\y,1,2);
}
\foreach \x in {0,2} {
  \draw [densely dotted] (0,1,\x) -- (2,1,\x);
  \foreach \y in {1} {
    \foreach \z in {0,1} {
      \node at (\y,\z,\x) [vertex, fill=black] {};
}}}
\node at (1,0,1) [vertex, fill=black] {};
\def\l{$x_0$}
\node at (0,1,0) [vertex, fill=black, label=above right:{$x_0$}] {};
\draw[->](0,0,0) to node[pos=.9, above] {$y$} (3,0,0);
\draw[->](0,0,0) to node[pos=.9, left] {$z$} (0,2,0);
\draw[->](0,0,0) to node[pos=.9, left] {$x$} (0,0,3);
\end{tikzpicture}
\end{center}
\caption{The image $X$ discussed in 
Example~\ref{contract-not-ptd}. All coordinates in this paper are ordered according to the axes in this Figure.
%, with line segments indicating $c_1$-adjacent points.
}
\label{contract-not-ptd-fig}
\end{figure}
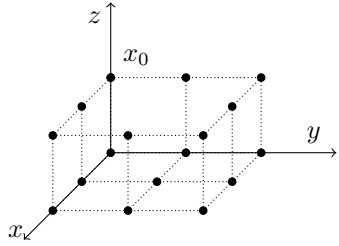

\begin{exl}
\label{contract-not-ptd} Let
$X=([0,2]_{\Z}^2 \times [0,1]_\Z) \setminus \{(1,1,1)\}$ (see  
Figure~\ref{contract-not-ptd-fig}). 
Let $x_0 =(0,0,1) \in X$. Then
$X$ is $6$-contractible, 
%hence has no $c_1$-holes,
but $(X,x_0)$ is not pointed $6$-contractible.
\end{exl}

\begin{proof}
We show $X$ is $6$-contractible as follows. Let
$H: X \times [0,5]_{\Z} \to X$ be defined by
\[ H(x,0)=x;~~~ H(a,b,c,1)=(a,b,0); \]
\[H(a,b,c,t)=(a,\max\{0,b+1-t\},0) \mbox{ for } t \in \{2,3\}; \]
\[H(a,b,c,t)=(\max\{0,a+3-t\},0,0) \mbox{ for } t \in \{4,5\}. \]
It is easy to see that $H$ is a 6-contraction of $X$. 

Let \[P=([0,2]_{\Z}^2 \setminus \{(1,1)\}) \times \{1\}
\subset X.\]
Let $K: X \times [0,m]_{\Z} \to X$ be a 
$6$-contraction of $X$. As a simple closed curve of
more than 4 points, $P$ is not 
contractible~\cite{Boxer10},
so there exist $(a,b,1) \in P$ 
and some $t_0$ such that $K(a,b,1,t_0) \in [0,2]_\Z \times \{0\}$.
Since the induced function $K_{t_0}$ is $6$-continuous,
we must have $K_{t_0}(P) \subset [0,2]_\Z \times \{0\}$ 
(note this argument is 
suggested by the notion of ``path-pulling'' homotopy
discussed in~\cite{Staecker-etal}). In particular,
%$K(x_0,t_0) \in [0,2]_{\Z} \times \{0\}$, so $K$
$K_{t_0}(x_0) \in [0,2]_{\Z} \times \{0\}$, so $K$
is not a pointed contraction. Since $K$ is
an arbitrary contraction of $X$, it follows that
$(X,x_0)$ is not pointed contractible.
\end{proof}

\begin{definition}
\label{nullhomotopy}
A continuous function $f: X \to Y$ is {\em nullhomotopic} in $Y$ if $f$ is homotopic in $Y$ to
a constant function. A pointed continuous function $f: (X,x_0) \to (Y,y_0)$ is {\em pointed nullhomotopic} in $Y$ if $f$ is pointed homotopic in $Y$ to
the constant function $\overline{y_0}$. A subset
$Y$ of $X$ is {\em nullhomotopic} in $X$ if the
inclusion map $i: Y \to X$ is nullhomotopic in $X$. A pointed subset
$(Y,x_0)$ of $(X,x_0)$ is {\em pointed nullhomotopic} in $X$ if the
inclusion map $i: (Y,x_0) \to (X,x_0)$ is pointed nullhomotopic in $X$.
\end{definition}

\subsection{Digital Loops}
Material in this section is largely quoted or
paraphrased from~\cite{Boxer06}.

If $f$ and $g$ are digital $\kappa-$paths in $X$
such that $g$ starts where $f$ ends,
the {\em product} (see~\cite{Khalimsky})
of $f$ and $g$, written $f \cdot g$, is, intuitively, the $\kappa-$path obtained
by following $f$ by $g$.  Formally, if $f: [0,m_1]_{{\Z}} \rightarrow X$,
$g: [0,m_2]_{{\Z}} \rightarrow X$, and $f(m_1)=g(0)$, then
$(f \cdot g): [0,m_1+m_2]_{{\Z}} \rightarrow X$ is defined by
\[(f \cdot g)(t) = \left\{
         \begin{array}{ll}
               f(t) & \mbox{if } t \in [0,m_1]_{{\Z}}; \\
               g(t - m_1) & \mbox{if } t \in [m_1,m_1+m_2]_{{\Z}}.
         \end{array}
        \right .  \]

%\begin{Lemma}
%\label{assoc-product}
%{\rm \cite{Boxer99}}
%The product operation is associative.
%\end{Lemma}

It is undesirable to restrict homotopy classes of loops
to loops defined on the
same digital interval.  The following
notion of {\em trivial extension} allows a loop
to ``stretch" and remain in the same pointed homotopy class.
Intuitively, $f'$ is a trivial extension of $f$ if $f'$ follows the same
path as $f$, but more slowly, with pauses for rest
(subintervals of the domain on which $f'$ is constant).

\begin{definition}
\label{triv-extension}
{\rm \cite{Boxer99}}
Let $f$ and $f'$ be $\kappa-$paths in a digital image $X$. We say
$f'$ is a {\rm trivial extension of} $f$ if there are sets of $\kappa-$paths
$\{f_1, f_2, \ldots, f_k\}$ and $\{F_1, F_2, \ldots, F_p\}$ in $X$ such that
\begin{enumerate}
  \item $k \leq p$;
  \item $f = f_1 \cdot f_2 \cdot \ldots \cdot f_k$;
  \item $f' = F_1 \cdot F_2 \cdot \ldots \cdot F_p$; and
  \item there are indices
        $1 \leq i_1 < i_2 < \ldots < i_k \leq p$ such that
  \begin{itemize}
    \item $F_{i_j} = f_j$, $1 \leq j \leq k$, and
    \item $i \not \in \{i_1, i_2, \ldots, i_k\}$ implies $F_i$ is a
          trivial loop. $\Box$
  \end{itemize}
\end{enumerate}
\end{definition}

This notion allows us to compare the digital homotopy properties of loops
whose domains may have differing cardinality, since if $m_1 \leq m_2$,
we can obtain a trivial extension of a loop
$f:[0,m_1]_{{\Z}} \rightarrow X$ to
$f':[0,m_2]_{{\Z}} \rightarrow X$ via
\[ f'(t) =  \left\{
         \begin{array}{ll}
               f(t) & \mbox{if } 0 \leq t \leq m_1; \\
               f(m_1) &  \mbox{if } m_1 \leq t \leq m_2.
         \end{array}
        \right.  \]
%We observe that every digital loop $f$ is a trivial extension of itself.

We use the following notions to define the class of a pointed loop.

\begin{definition}
\label{holds-fixed}
Let $f, g: [0,m]_{{\Z}} \rightarrow (X, x_0)$ be digital loops with
basepoint $x_0$.
If $H: [0,m]_{{\Z}} \times [0,M]_{{\Z}} \rightarrow X$ is a
digital homotopy between $f$ and $g$
such that for all $t \in [0,M]_{{\Z}}$ we have
\[ H(0,t) ~=~ H(m,t), \]
we say $H$ {\rm is loop-preserving}. If, further,
for all $t \in [0,M]_{{\Z}}$ we have
\[H(0,t) ~=~ H(m,t) ~=~ x_0, \]
we say $H$ {\rm holds the endpoints fixed}. $\Box$
\end{definition}

%The term ``loop-preserving" is meant to suggest that every
%(time) stage of the homotopy yields a digital loop.

Digital $\kappa-$loops $f$ and $g$ in X with the same basepoint $p$
{\em belong to the same $\kappa-$loop class in} $X$
if there are trivial extensions
$f'$ and $g'$ of $f$ and $g$, respectively, whose domains have the same
cardinality, and a homotopy between $f'$ and $g'$
that holds the endpoints fixed ~\cite{Boxer99}.

%We have the following.
%\begin{Proposition}
%\label{loop-equiv-rel}
%{\rm \cite{Boxer99}}
Membership in the same loop class in $(X,x_0)$ is
an equivalence relation among digital $\kappa-$loops
~\cite{Boxer99}.
%\end{Proposition}

We denote by $[f]$ the loop class of a loop $f$ in $X$.
We have the following.

\begin{prop}
\label{well-defined}
{\rm \cite{Boxer99,Khalimsky}}
Suppose $f_1,f_2,g_1,g_2$ are digital loops in a pointed digital
image $(X,x_0)$, with $f_2 \in [f_1]$ and
$g_2 \in [g_1]$.  Then $f_2 \cdot g_2 \in [f_1 \cdot g_1]$. $\Box$
\end{prop}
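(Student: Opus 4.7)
The plan is to build explicit trivial extensions of $f_1 \cdot g_1$ and $f_2 \cdot g_2$ having the same domain, together with an endpoint-fixing homotopy between them, by splicing the homotopies supplied by the hypotheses.

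First, unpack the hypotheses. From $f_2 \in [f_1]$ obtain trivial extensions $f_1', f_2' : [0,m_1]_{\Z} \to X$ and an endpoint-fixing homotopy $H_1 : [0,m_1]_{\Z} \times [0,M_1]_{\Z} \to X$ from $f_1'$ to $f_2'$. Similarly from $g_2 \in [g_1]$ obtain $g_1', g_2' : [0,m_2]_{\Z} \to X$ and an endpoint-fixing homotopy $H_2 : [0,m_2]_{\Z} \times [0,M_2]_{\Z} \to X$. Note that $f_i'(0) = f_i'(m_1) = g_i'(0) = g_i'(m_2) = x_0$, so the products $f_1' \cdot g_1'$ and $f_2' \cdot g_2'$ are defined loops on $[0,m_1+m_2]_{\Z}$. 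A direct inspection of Definition~\ref{triv-extension} shows that $f_i' \cdot g_i'$ is a trivial extension of $f_i \cdot g_i$ for each $i \in \{1,2\}$, because concatenating the defining decompositions of $f_i'$ and $g_i'$ as trivial extensions yields a decomposition of $f_i' \cdot g_i'$ as a trivial extension of $f_i \cdot g_i$.

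Next, make the time axes of $H_1$ and $H_2$ agree. Let $M = \max\{M_1, M_2\}$ and extend each $H_j$ in the $t$ direction by holding its final value, i.e., replace $H_j(s,t)$ for $t > M_j$ by $H_j(s, M_j)$. The extended homotopies, which I still call $H_1, H_2$, remain continuous and still fix endpoints at $x_0$. Now define
\[
H : [0, m_1 + m_2]_{\Z} \times [0,M]_{\Z} \to X, \quad
H(s,t) = \begin{cases} H_1(s,t) & \text{if } s \in [0,m_1]_{\Z}, \\ H_2(s - m_1, t) & \text{if } s \in [m_1, m_1+m_2]_{\Z}. \end{cases}
\]
On the overlap $s = m_1$ both clauses return $x_0$, so $H$ is well defined.

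Finally, verify that $H$ is a homotopy of the required kind. At $t=0$, $H(\cdot,0) = f_1' \cdot g_1'$ and at $t = M$, $H(\cdot,M) = f_2' \cdot g_2'$. For each fixed $s$, $t \mapsto H(s,t)$ is a path in $X$, being either an extended slice of $H_1$ or of $H_2$. For each fixed $t$, $s \mapsto H(s,t)$ is continuous: it is continuous on each of $[0,m_1]_{\Z}$ and $[m_1, m_1+m_2]_{\Z}$ since $H_1, H_2$ are homotopies, and the two pieces agree at the junction $s = m_1$. Endpoints are fixed at $x_0$ for all $t$, because $H(0,t) = H_1(0,t) = x_0$ and $H(m_1 + m_2, t) = H_2(m_2, t) = x_0$. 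Hence $H$ witnesses $f_2' \cdot g_2' \in [f_1' \cdot g_1']$, and since $f_i' \cdot g_i'$ is a trivial extension of $f_i \cdot g_i$, we conclude $f_2 \cdot g_2 \in [f_1 \cdot g_1]$.

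The only mildly subtle point is the time alignment step; once that is handled, the construction is a straightforward splice. The continuity of each $H_t$ at $s = m_1$ works cleanly precisely because the endpoint-fixing hypothesis forces both $H_1$ and $H_2$ to take the value $x_0$ there, which is exactly why this proposition requires loop-class representatives rather than arbitrary homotopy representatives.
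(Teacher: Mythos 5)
Your proof is correct and is essentially the standard argument for this result (which the paper itself cites to the literature and states without proof): concatenate the given trivial extensions to get trivial extensions of $f_1\cdot g_1$ and $f_2\cdot g_2$ on a common domain, pad the two homotopies to a common number of time steps, and splice them side by side, with the endpoint-fixing condition guaranteeing both that the splice is well defined at $s=m_1$ and that each stage is continuous there. No gaps.
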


\subsection{Digital fundamental group}
Inspired by the fundamental group of a topological space,
several researchers~\cite{Stout,Kong,Boxer99,BoSt}
have developed versions of a fundamental
group for digital images. These are not all equivalent; 
however, it is shown in~\cite{BoSt} that the 
version of the fundamental group developed in that 
paper is equivalent to the version in~\cite{Boxer99}.
In this paper, we use the version of the fundamental
group developed in~\cite{Boxer99}.

Material appearing in this section is largely quoted
or paraphrased from other papers in digital topology.
See, e.g.,~\cite{Boxer99,Boxer06}.

Let $(X,p,\kappa)$ be a pointed digital image.
Consider the set
$\Pi_1^{\kappa}(X,p)$ of $\kappa$-loop classes $[f]$ in $X$ with
basepoint $p$.
By Proposition~\ref{well-defined}, the {\em product} operation
\[ [f] * [g]~=~[f \cdot g] \]
is well-defined on $\Pi_1^{\kappa}(X,p)$.
%
%\begin{Lemma} {\rm \cite{Khalimsky}}
%\label{assoc}
The operation $*$ is associative on $\Pi_1^{\kappa}(X,p)$~\cite{Khalimsky}.
%$\Box$
%\end{Lemma}

\begin{lem}
\label{ident-elt}
{\rm \cite{Boxer99}}
Let $(X,p)$ be a pointed digital image.
Let ${\overline{p}}: [0,m]_{{\Z}} \rightarrow X$ be the constant function
$\overline{p}(t)=p$.  Then $[{\overline{p}}]$ is an identity element for
$\Pi_1^{\kappa}(X, p)$. $\Box$
\end{lem}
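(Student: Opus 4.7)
The plan is to verify that for any $\kappa$-loop $f$ based at $p$, both products $f \cdot \overline{p}$ and $\overline{p} \cdot f$ lie in the loop class $[f]$. Since the product $*$ is well-defined by Proposition~\ref{well-defined}, this immediately yields $[f] * [\overline{p}] = [\overline{p}] * [f] = [f]$, so that $[\overline{p}]$ is a two-sided identity for $\Pi_1^\kappa(X,p)$.

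To prove $f \cdot \overline{p} \in [f]$, the key observation is that $f \cdot \overline{p}$ is itself already a trivial extension of $f$ in the sense of Definition~\ref{triv-extension}. Indeed, with $f: [0,n]_{\Z} \to X$ and $\overline{p}: [0,m]_{\Z} \to X$, decompose $f$ into $k=1$ piece $f_1 = f$ and $f \cdot \overline{p}$ into $P=2$ pieces $F_1 = f$, $F_2 = \overline{p}$, choosing the index sequence $i_1 = 1$. Then $F_{i_1} = F_1 = f = f_1$, and the only remaining piece $F_2 = \overline{p}$ is a trivial loop as in Definition~\ref{dig-loop}. Hence, taking $f' = g' = f \cdot \overline{p}$ as the common trivial extension of $f$ and of $f \cdot \overline{p}$, and using the stationary homotopy $H(s,t) = (f \cdot \overline{p})(s)$ (which holds both endpoints fixed at $p$ since the domain-endpoint values of $f \cdot \overline{p}$ both equal $p$), we conclude that $f$ and $f \cdot \overline{p}$ belong to the same loop class.

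A symmetric argument handles $\overline{p} \cdot f \in [f]$: decompose $\overline{p} \cdot f$ into $F_1 = \overline{p}$, $F_2 = f$, and take the index sequence $i_1 = 2$ so that $F_{i_1} = f = f_1$, with the remaining piece $F_1 = \overline{p}$ again being a trivial loop. The same stationary homotopy argument then places $\overline{p} \cdot f$ in $[f]$.

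There is no real obstacle here; the entire content is an unpacking of the definitions of product, trivial extension, and loop class. The only point that warrants a bit of care is checking that the stationary homotopy qualifies as a homotopy that holds the endpoints fixed in the sense of Definition~\ref{holds-fixed}, which follows immediately from the fact that $f \cdot \overline{p}$ (and $\overline{p} \cdot f$) begin and end at $p$.
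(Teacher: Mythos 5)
Your proof is correct: the paper states this lemma without proof (it is quoted from \cite{Boxer99} and marked $\Box$), and your argument --- observing that $f\cdot\overline{p}$ and $\overline{p}\cdot f$ are themselves trivial extensions of $f$ in the sense of Definition~\ref{triv-extension}, then using the stationary endpoint-fixing homotopy to place them in $[f]$ --- is exactly the standard argument from that reference. The only cosmetic issue is your use of $P$ for the number of pieces (the definition's $p$ clashes with the basepoint $p$), which you handled sensibly.
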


\begin{lem}
\label{inverse}
{\rm \cite{Boxer99}}
If $f: [0,m]_{{\Z}} \rightarrow X$ represents an element of $\Pi_1(X,p)$,
then the function $g: [0,m]_{{\Z}} \rightarrow X$ defined by
\[ g(t) = f(m-t) \mbox{ for } t \in [0,m]_{{\Z}} \]
is an element of $[f]^{-1}$ in $\Pi_1^{\kappa}(X,p)$. $\Box$
\end{lem}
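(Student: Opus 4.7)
The plan is to show that the loop $g(t) = f(m-t)$ satisfies $[f]*[g]=[\overline{p}]$ and $[g]*[f]=[\overline{p}]$, so that $[g]$ is a two-sided inverse of $[f]$ in $\Pi_1^\kappa(X,p)$. The standard topological idea adapts cleanly: the concatenation $f\cdot g$ walks out along $f$ and immediately retraces itself, so one can continuously shrink how far it walks before turning around, eventually collapsing to the constant loop at $p$.

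First I would check that $g$ is actually a loop at $p$: continuity of $g$ follows from continuity of $f$ because consecutive values $g(t),g(t+1)$ are $f(m-t),f(m-t-1)$, which are $\kappa$-adjacent or equal; and $g(0)=f(m)=p=f(0)=g(m)$. Then I would write down the candidate homotopy $H: [0,2m]_\Z \times [0,m]_\Z \to X$ between $f\cdot g$ and the trivial extension $\overline{p}$ on $[0,2m]_\Z$, defined piecewise by
\[ H(s,t) = \begin{cases} f(s) & \text{if } s \le m-t, \\ f(m-t) & \text{if } m-t \le s \le m+t, \\ f(2m-s) & \text{if } s \ge m+t. \end{cases} \]
At $t=0$ this agrees with $(f\cdot g)(s)$, and at $t=m$ the middle case covers all of $[0,2m]_\Z$ and yields the constant value $f(0)=p$. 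The endpoint condition is visible directly: $H(0,t)=f(0)=p$ and $H(2m,t)=f(0)=p$ for every $t$.

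The main technical step is verifying the three continuity conditions of Definition~\ref{htpy-2nd-def} for $H$. Horizontally (varying $s$ with $t$ fixed), each piece is a reparametrization of $f$ or is constant, and at the boundaries $s=m-t$ and $s=m+t$ the adjacent pieces agree in value, so $H_t$ is a $(2,\kappa)$-path. Vertically (varying $t$ with $s$ fixed), when the case doesn't change $H(s,t+1)=H(s,t)$; when the case changes between $t$ and $t+1$, a short check shows $H(s,t)$ and $H(s,t+1)$ are either equal or differ by one step along $f$, hence $\kappa$-adjacent. This is the most error-prone part of the argument, so I would carefully enumerate the four possible transitions (first-to-middle at $s=m-t$, and middle-to-third at $s=m+t$, going both from $t$ to $t+1$). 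Together these imply that $H$ is a homotopy rel endpoints, giving $f\cdot g \in [\overline{p}]$ and hence $[f]*[g]=[\overline{p}]$.

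Finally, to obtain $[g]*[f]=[\overline{p}]$, I would apply the same construction with the roles of $f$ and $g$ exchanged, noting that the reversal of $g$ is $f$, so the symmetric argument produces a homotopy showing $g\cdot f \simeq \overline{p}$ rel endpoints. Combined with Lemma~\ref{ident-elt}, this shows $[g]=[f]^{-1}$. I expect the hard part to be nothing deep but rather the bookkeeping in verifying continuity across the case boundaries of $H$; everything else is a direct application of the preceding definitions.
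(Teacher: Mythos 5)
Your construction is correct: the piecewise homotopy $H$ does satisfy all three continuity conditions of Definition~\ref{htpy-2nd-def}, holds the endpoints fixed at $p$, and deforms $f\cdot g$ to the constant loop on $[0,2m]_\Z$ (a trivial extension of $\overline{p}$), with the symmetric argument handling $g\cdot f$. The paper itself gives no proof here --- it cites the result from \cite{Boxer99} --- and your ``retrace and shrink'' homotopy is precisely the standard argument used there, so nothing further is needed.
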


\begin{thm}
{\rm \cite{Boxer99}}
$\Pi_1^{\kappa}(X,p)$ is a group under the $*$ product operation,
the {\em $\kappa$-fundamental group of} $(X,p)$. $\Box$
\end{thm}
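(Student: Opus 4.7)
The plan is to verify, in turn, each of the four group axioms for $(\Pi_1^\kappa(X,p), *)$. Three of these are essentially already in hand from results immediately preceding the statement: well-definedness of $*$ on loop classes is exactly Proposition~\ref{well-defined}; associativity is the result of \cite{Khalimsky} mentioned between Lemma~\ref{ident-elt} and the theorem; and $[\overline{p}]$ is a two-sided identity by Lemma~\ref{ident-elt}. The only axiom requiring real work is the existence of inverses, since Lemma~\ref{inverse} asserts the inverse property but does not, on its face, exhibit the required loop-class equivalence.

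To prove that $[f]$ has the reversed loop $g(t) = f(m-t)$ as an inverse, I would show directly that $f \cdot g$ and the constant loop $\overline{p}$ on $[0,2m]_{\Z}$ lie in the same loop class. The natural construction is the ``fold-back'' homotopy $H : [0,2m]_{\Z} \times [0,m]_{\Z} \to X$ defined by
\[ H(t,s) = \begin{cases} f(t) & \text{if } 0 \le t \le m-s, \\ f(m-s) & \text{if } m-s \le t \le m+s, \\ f(2m-t) & \text{if } m+s \le t \le 2m, \end{cases} \]
which progressively cancels the outgoing portion of $f$ against the returning portion of $g$. When $s=0$ the slice $H_0$ is $f \cdot g$; when $s=m$ the slice $H_m$ is the constant function $\overline{p}$; and for every $s$ the values at $t=0$ and $t=2m$ are $f(0) = p$, so the homotopy holds the endpoints fixed in the sense of Definition~\ref{holds-fixed}. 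The symmetric identity $[g] * [f] = [\overline{p}]$ follows by interchanging the roles of $f$ and $g$.

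The main (and only) obstacle is verifying that $H$ really is a digital homotopy in the sense of Definition~\ref{htpy-2nd-def}. For fixed $s$, the three cases of the definition agree on their overlaps at $t = m-s$ and $t = m+s$ (each giving the value $f(m-s)$), and each case is a $\kappa$-path because $f$ is; so $H_s$ is a $\kappa$-loop based at $p$. For fixed $t$, as $s$ increases by $1$ the value $H(t,s)$ either is unchanged or moves from $f(k)$ to $f(k\pm 1)$, which is again a step along the $\kappa$-path $f$; so $H_t$ is $(2,\kappa)$-continuous. Once these routine checks are made, the loop-class equality $[f] * [g] = [\overline{p}]$ combined with Lemma~\ref{ident-elt} completes the verification of the inverse axiom, and hence of the theorem.
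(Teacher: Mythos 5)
Your proposal is correct and follows essentially the route the paper intends: the theorem is stated as a citation to \cite{Boxer99} and is meant to be assembled from Proposition~\ref{well-defined} (well-definedness), the associativity remark from \cite{Khalimsky}, Lemma~\ref{ident-elt} (identity), and Lemma~\ref{inverse} (inverses), exactly as you organize it. Your explicit ``fold-back'' homotopy correctly supplies the one verification the paper delegates entirely to the reference --- that $f \cdot g$ lies in the class of $\overline{p}$ with endpoints held fixed --- and your continuity checks in both variables are the right ones for Definition~\ref{htpy-2nd-def}.
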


It follows from the next result that in a connected digital image $X$,
the digital fundamental group is independent of the choice of basepoint.

\begin{thm}
\label{basept-indep}
{\rm \cite{Boxer99}}
Let $X$ be a digital image with adjacency relation $\kappa$. If
$p$ and $q$ belong to the same $\kappa-$component of $X$, then
%with respect to $\kappa$,
$\Pi_1^{\kappa}(X,p)$ and $\Pi_1^{\kappa}(X,q)$ are isomorphic groups.
$\Box$
\end{thm}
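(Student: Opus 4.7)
The plan is to imitate the classical topological proof of basepoint independence of $\pi_1$, adapting each step to the digital setting by using trivial extensions and the loop-class machinery developed in the previous section.

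First I would pick, using $\kappa$-connectedness of the component containing $p$ and $q$, a $\kappa$-path $\alpha: [0,n]_\Z \to X$ with $\alpha(0) = p$ and $\alpha(n) = q$, and let $\bar\alpha(t) = \alpha(n-t)$ denote its reverse. I would then define
\[ \phi_\alpha: \Pi_1^\kappa(X,q) \to \Pi_1^\kappa(X,p), \qquad \phi_\alpha([f]) = [\alpha \cdot f \cdot \bar\alpha]. \]
That $\phi_\alpha$ is well-defined on loop classes is an immediate application of Proposition~\ref{well-defined} applied twice (with the constant loop classes $[\alpha]$ and $[\bar\alpha]$ as fixed first/last factors). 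That $\phi_\alpha$ is a homomorphism reduces, after associativity of $*$, to showing that $[\bar\alpha \cdot \alpha] = [\overline{q}]$ in $\Pi_1^\kappa(X,q)$, so that the middle copies of $\bar\alpha \cdot \alpha$ in $\phi_\alpha([f]) * \phi_\alpha([g]) = [\alpha \cdot f \cdot \bar\alpha \cdot \alpha \cdot g \cdot \bar\alpha]$ can be canceled.

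Second, I would show $\phi_\alpha$ is a bijection by exhibiting the evident inverse $\phi_{\bar\alpha}$, whose composition with $\phi_\alpha$ in either order gives conjugation by $\alpha \cdot \bar\alpha$ or $\bar\alpha \cdot \alpha$; the bijection claim therefore reduces to the two identities $[\alpha \cdot \bar\alpha] = [\overline{p}]$ and $[\bar\alpha \cdot \alpha] = [\overline{q}]$.

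The main obstacle is constructing, entirely within the digital framework, the endpoint-fixed homotopy that shows $\alpha \cdot \bar\alpha \simeq \overline{p}$ (and symmetrically for $\bar\alpha \cdot \alpha$). The intuition is the standard ``shrink from both ends'' deformation: at time $t$ the loop travels out along $\alpha$ to $\alpha(n-t)$, then immediately returns. Concretely, using a trivial extension of $\overline{p}$ to a loop $g:[0,2n]_\Z \to X$ of the same length as $\alpha \cdot \bar\alpha$, I would define $H:[0,2n]_\Z \times [0,n]_\Z \to X$ by
\[ H(s,t) = \begin{cases} \alpha(s) & \text{if } 0 \le s \le n-t, \\ \alpha(n-t) & \text{if } n-t \le s \le n+t, \\ \alpha(2n-s) & \text{if } n+t \le s \le 2n, \end{cases} \]
and verify the three conditions in Definition~\ref{htpy-2nd-def}: each slice $H_t$ is $(2,\kappa)$-continuous because it is a concatenation of a sub-path of $\alpha$, a constant piece, and a sub-path of $\bar\alpha$; each slice $H_s$ changes by at most one step of $\alpha$ when $t$ increments, so is $(2,\kappa)$-continuous; and the endpoint condition $H(0,t) = H(2n,t) = p$ holds for all $t$, so $H$ holds the endpoints fixed in the sense of Definition~\ref{holds-fixed}. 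The case $\bar\alpha \cdot \alpha \simeq \overline{q}$ is entirely analogous. With these two identities in hand, the homomorphism and bijection properties above yield the required isomorphism.
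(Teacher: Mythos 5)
The paper does not prove this theorem; it is quoted from \cite{Boxer99} and stated with no argument, so there is no in-paper proof to compare against. Your proposal is the standard change-of-basepoint argument, which is exactly how the result is established in the cited source, and the details check out: the map $\phi_\alpha([f])=[\alpha\cdot f\cdot\bar\alpha]$, the reduction of both the homomorphism and bijectivity claims to $[\alpha\cdot\bar\alpha]=[\overline{p}]$ and $[\bar\alpha\cdot\alpha]=[\overline{q}]$, and your explicit ``shrink from both ends'' homotopy $H$ all work. In particular $H$ is well-defined on the overlaps $s=n\pm t$, each slice $H_t$ is a concatenation of subpaths of $\alpha$ and a constant piece, consecutive time-slices differ by at most one step along $\alpha$, and $H(0,t)=H(2n,t)=p$ throughout, so $H$ is an endpoint-fixed homotopy from $\alpha\cdot\bar\alpha$ to the constant loop of length $2n$ (a trivial extension of $\overline{p}$). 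The one small technical point worth flagging: Proposition~\ref{well-defined} as stated in this paper concerns products of \emph{loops} at a common basepoint, whereas your well-definedness step applies it to products involving the non-loop paths $\alpha$ and $\bar\alpha$. You need the (entirely analogous, and true) path version of that proposition --- given an endpoint-fixed homotopy from a trivial extension of $f$ to $f'$, extend it by the constant homotopy on the $\alpha$ and $\bar\alpha$ factors to get an endpoint-fixed homotopy of the conjugated loops. This is routine but should be stated rather than attributed directly to Proposition~\ref{well-defined}.
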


%\begin{Proposition}
%{\rm \cite{Boxer99}}
%Let $(X,p)$ be a pointed digital image and let $Y$ be the component of
%$X$ containing $p$.  Then the inclusion
%$(Y,p) \hookrightarrow (X,p)$ induces an isomorphism of the groups
%$\Pi_1(X,p)$ and $\Pi_1(Y,p)$.
%$\Box$
%\end{Proposition}

Despite the existence of images that are
homotopy equivalent but not pointed homotopy
equivalent (\cite{Staecker-etal,BoSt}, Example~\ref{contract-not-ptd}),
notice that we do not require the homotopy equivalence in the following theorem to be a pointed homotopy
equivalence.

\begin{thm}
\rm{\cite{BoSt}}
\label{equiv-image-iso}
Let $f:(X,\kappa) \rightarrow (Y,\lambda)$ be a
$(\kappa,\lambda)$-homotopy equivalence of connected digital images.  Then $\Pi_1^{\kappa}(X,x_0)$ and
$\Pi_1^{\lambda}(Y,f(x_0))$ are isomorphic groups. $\Box$
\end{thm}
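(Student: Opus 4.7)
The plan is to imitate the classical topological proof and to handle the fact that $f$ is not assumed to be a pointed homotopy equivalence via a digital analog of the change-of-basepoint isomorphism. First I would define $f_*:\Pi_1^{\kappa}(X,x_0)\to\Pi_1^{\lambda}(Y,f(x_0))$ by $f_*[\alpha]=[f\circ\alpha]$ and check that it is a well-defined group homomorphism; this is a routine consequence of Proposition~\ref{well-defined} together with the fact that post-composition by a continuous function sends loop-preserving homotopies (Definition~\ref{holds-fixed}) to loop-preserving homotopies. Let $g:Y\to X$ be a homotopy inverse of $f$, and set $y_0=f(x_0)$, $x_1=g(y_0)$, $y_1=f(x_1)$. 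The strategy is then to show that $(g\circ f)_*=g_*\circ f_*$ and $(f\circ g)_*=f_*\circ g_*$ are both isomorphisms between the appropriate fundamental groups; a standard diagram chase (two-out-of-three for isomorphisms) then forces $f_*$ itself to be an isomorphism.

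The crux is a digital change-of-basepoint lemma: if $\phi,\psi:X\to X$ are continuous and $H$ is an \emph{unpointed} homotopy from $\phi$ to $\psi$, then the tracking path $\gamma(s)=H(x_0,s)$ is a $\kappa$-path from $\phi(x_0)$ to $\psi(x_0)$, and for every loop $\alpha$ based at $x_0$ one has
\[ [\phi\circ\alpha]=[\gamma\cdot(\psi\circ\alpha)\cdot\overline{\gamma}] \]
in $\Pi_1^{\kappa}(X,\phi(x_0))$, where $\overline{\gamma}$ is the reverse path in the sense of Lemma~\ref{inverse}. Equivalently, $\phi_*$ equals the change-of-basepoint conjugation from Theorem~\ref{basept-indep} composed with $\psi_*$. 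Applied to $\phi=g\circ f$ and $\psi=\id_X$ this realizes $(g\circ f)_*$ as an isomorphism onto $\Pi_1^{\kappa}(X,x_1)$; the analogous argument in $Y$ does the same for $(f\circ g)_*$. To prove the lemma I would work with the discrete analog of the classical ``rectangular homotopy'' argument: the map $K(t,s)=H(\alpha(t),s)$ on $[0,m]_{\Z}\times[0,M]_{\Z}$ is already a homotopy between $\phi\circ\alpha$ and $\psi\circ\alpha$ whose two vertical sides both trace $\gamma$, so one pastes $\gamma$ on the left and $\overline{\gamma}$ on the right at every time slice, using trivial extensions (Definition~\ref{triv-extension}) to align loop domains, to produce a loop-preserving homotopy witnessing the displayed equality.

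The hardest step will be this final pasting. Because digital homotopies are parameterized over integer intervals and because pointed loop classes are sensitive to the precise shape of a homotopy, getting the glued map to be simultaneously $(\kappa,\kappa)$-continuous on a rectangular integer domain of the correct dimensions and to hold $\phi(x_0)$ fixed at every time slice requires careful bookkeeping of trivial extensions on both sides of the rectangle. This is exactly the pointed-versus-unpointed subtlety flagged in the introduction and illustrated by Example~\ref{contract-not-ptd}: in the continuous topological setting one simply reparameterizes along the boundary of a square, but in the discrete setting one must commit to explicit integer lengths and verify continuity at every junction where the pasted slabs meet $K$. Once this change-of-basepoint lemma is established cleanly, the remainder of the argument reduces to routine algebra in the fundamental groups.
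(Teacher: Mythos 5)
The paper does not actually prove this theorem: it is quoted from \cite{BoSt} and stated with no argument, so there is no in-paper proof to compare against. Your proposal is the standard (and correct) route, and it matches the strategy of the cited source: induced homomorphisms $f_*$, $g_*$, an unpointed change-of-basepoint lemma showing $(g\circ f)_*$ and $(f\circ g)_*$ agree with basepoint-change conjugations (hence are isomorphisms by Theorem~\ref{basept-indep}), and a two-out-of-three argument. The one step you flag as hardest --- pasting the tracking path onto every time slice of the rectangular homotopy while keeping integer domains aligned and endpoints fixed --- is handled by exactly the device this paper uses in its proof of Theorem~\ref{noloopholepi1}: set $p_t(s)=p(\min\{s,t\})$ so that every conjugating path $p_t$ has the same domain $[0,M]_{\Z}$, and define $\bar H(s,t)=(p_t\cdot H_t\cdot p_t^{-1})(s)$ on a single rectangle $[0,m+2M]_{\Z}\times[0,M]_{\Z}$; continuity in $t$ holds because $p_t$ and $p_{t+1}$ differ only where their values are $p(t)$ and $p(t+1)$, which are equal or adjacent. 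With that lemma in hand, your remaining algebra ($g_*\circ f_*$ and $f_*\circ g_*$ isomorphisms force $g_*$ bijective, hence $f_*=g_*^{-1}\circ(g_*\circ f_*)$ is an isomorphism) is sound; just note that for well-definedness of $f_*$ you need post-composition to preserve homotopies holding the endpoints fixed (not merely loop-preserving ones), which it does.
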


Similarly, in the following we do not require
pointed contractibility.

\begin{cor}
\label{contractible-trivial}
If $(X,\kappa)$ is a contractible digital image,
then $\Pi_1^{\kappa}(X,x_0)$ is a trivial group.
\end{cor}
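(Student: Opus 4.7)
The plan is to leverage Theorem~\ref{equiv-image-iso}, which crucially does not require the homotopy equivalence to be pointed. First I would use the observation made just after Definition~\ref{htpy-trivial}: $X$ is $\kappa$-contractible if and only if $X$ has the homotopy type of a one-point digital image $\{p\}$. Thus contractibility gives a $(\kappa,\kappa')$-homotopy equivalence $f: X \to \{p\}$ (with $\kappa'$ any adjacency on the singleton, which is vacuous).

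Next, I would apply Theorem~\ref{equiv-image-iso} directly to this $f$, obtaining an isomorphism
\[ \Pi_1^{\kappa}(X, x_0) \cong \Pi_1^{\kappa'}(\{p\}, p). \]
Finally, I would observe that in a one-point image the only $\kappa'$-loop based at $p$ is a constant loop, so every loop class is the identity class from Lemma~\ref{ident-elt}; hence $\Pi_1^{\kappa'}(\{p\}, p)$ is trivial. Combining with the isomorphism above yields the conclusion.

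The subtle point — and the reason this corollary is not entirely immediate — is that contractibility only produces an unpointed homotopy equivalence: by Example~\ref{contract-not-ptd}, one cannot in general upgrade the contracting homotopy to hold $x_0$ fixed. If Theorem~\ref{equiv-image-iso} required a pointed homotopy equivalence, the argument would fail. The whole content of the corollary therefore rests on the emphasis (noted in the remark preceding the theorem) that Theorem~\ref{equiv-image-iso} works without basepoint preservation; once that is in hand, the proof is a one-line application.
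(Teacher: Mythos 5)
Your proof is correct and follows exactly the paper's own argument: contractibility gives a (not necessarily pointed) homotopy equivalence with a one-point image, and Theorem~\ref{equiv-image-iso} then transfers the trivial fundamental group back to $X$. Your added remark about why the unpointed version of Theorem~\ref{equiv-image-iso} is essential is accurate and matches the discussion preceding that theorem in the paper.
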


\begin{proof} Since $X$ is contractible, $X$ is
homotopy equivalent to a one-point image,
which has a trivial fundamental group. The assertion
follows from Theorem~\ref{equiv-image-iso}.
\end{proof}

%\section{Han's Lemma 3.3}
\section{Fundamental groups of 2-spheres}
\label{han-3-3}
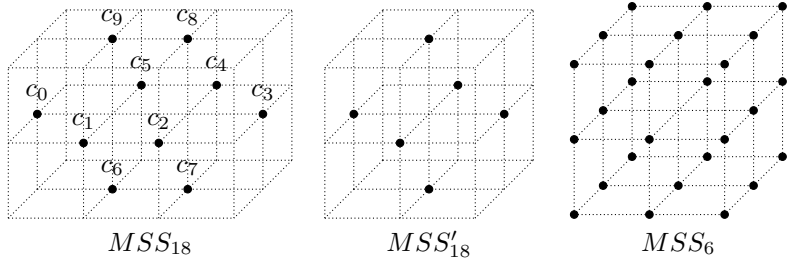
\begin{figure}

\begin{tabular}{ccc}
\begin{tikzpicture}
%
% points in coords with their labels
\def \pts{0/0/0/c_0,1/1/0/c_1,1/2/0/c_2,0/3/0/c_3,-1/2/0/c_4,-1/1/0/c_5,0/1/-1/c_6,0/2/-1/c_7,0/2/1/c_8,0/1/1/c_9};
\xyzgrid{-1}{1}{0}{3}{-1}{1}
\foreach \x/\y/\z/\l in \pts {
%tikz plots x&y on plane of page, z orthogonally
% we want y&z in page, x orthogonally
            \node at (\y,\z,\x) [vertex, fill=black, label=$\l$,] {};
}
\end{tikzpicture}%
&%
\begin{tikzpicture} % MSS'_{18}
%
% points in coords with their labels
\def \pts{0/0/0/, 1/1/0/, 0/2/0/, -1/1/0/, 0/1/-1/, 0/1/1/};
\xyzgrid{-1}{1}{0}{2}{-1}{1};
\foreach \x/\y/\z/\l in \pts {
%tikz plots x&y on plane of page, z orthogonally
% we want y&z in page, x orthogonally
            \node at (\y,\z,\x) [vertex, fill=black, label=$\l$,] {};
}
\end{tikzpicture}
&%
\begin{tikzpicture} % MSS'_{18}
%
% points in coords with their labels
\def \pts{0/0/0/,0/0/1/,0/0/2/,0/1/0/,0/1/1/,0/1/2/,0/2/0/,0/2/1/,0/2/2/,
              1/0/0/,1/0/1/,1/0/2/,1/1/0/,          1/1/2/,1/2/0/,1/2/1/,1/2/2/,
              2/0/0/,2/0/1/,2/0/2/,2/1/0/,2/1/1/,2/1/2/,2/2/0/,2/2/1/,2/2/2/}
\xyzgrid{0}{2}{0}{2}{0}{2};
\foreach \x/\y/\z/\l in \pts {
%tikz plots x&y on plane of page, z orthogonally
% we want y&z in page, x orthogonally
            \node at (\y,\z,\x) [vertex, fill=black, label=$\l$,] {};
}
\end{tikzpicture}
\\
$MSS_{18}$ & $MSS'_{18}$ & $MSS_6$
\end{tabular}
\caption{Three different digital images that model the 2-sphere}
\label{3surfs}
\end{figure}

The following digital images are considered in~\cite{Han07}.
Each in some sense models the 2-dimensional
sphere $S^2$ in Euclidean 3-space
(see Figure~\ref{3surfs}).
\begin{itemize}
\item $MSS_{18} = \{c_i\}_{i=0}^9$, where
\[c_0=(0,0,0), c_1=(1,1,0), c_2=(1,2,0),c_3=(0,3,0),
  c_4=(-1,2,0), \]
\[ c_5=(-1,1,0),
 c_6=(0,1,-1),c_7=(0,2,-1),c_8=(0,2,1),c_9=(0,1,1).
\]

\item $MSS'_{18}=MSS'_{26}=\\ \{(0,0,0), (1,1,0),(0,2,0), (-1,1,0), (0,1,-1), (0,1,1)\}$
%\{e_i\}_{i=0}^5$, where
%\[ e_0=(0,0,0), e_1=(1,1,0), e_2=(0,2,0), 
%   e_3=(-1,1,0),e_4=(0,1,-1),e_5=(0,1,1).
%\]

\item $MSS_6=[0,2]_{\Z}^3 \setminus \{(1,1,1)\}$.
\end{itemize}

%Lemma~3.3 of~\cite{Han07} has four assertions, each
%of which is either incorrect or incorrectly ``proven.''

%Lemma~3.3(1) of~\cite{Han07} asserts that 
%$\Pi_1^{18}(MSS_{18})$ is trivial. This
%assertion is correct, but the argument given
%on its behalf is not. Han's ``proof'' states
%\begin{quote} Even though $MSS_{18}$ is not contractible ... every 18-loop on $MSS_{18}$
%is 18-nullhomotopic in $MSS_{18}$.
%\end{quote}
%To justify the latter claim, Han merely gives examples
%of 18-nullhomotopic loops in $MSS_{18}$. Thus, Han's argument is incorrect for the following reasons.
%\begin{itemize}
%\item To show that certain loops are 18-nullhomotopic
%      in $MSS_{18}$ does not establish that all loops 
%      are 18-nullhomotopic in $MSS_{18}$.
%\item Han fails to show that his nullhomotopies hold the endpoints fixed. To establish the triviality of
%      $\Pi_1^{18}(MSS_{18})$, one must show
%      that given some base point 
%      $x_0 \in MSS_{18}$, all $x_0$-based loops
%      in $MSS_{18}$ are 18-nullhomotopic in $MSS_{18}$
%      holding the endpoints fixed.
%\end{itemize}

%We give a valid proof of Han's Lemma~3.3(1) in Proposition~\ref{correct-3-3-1} below.

In this section we will show that fundamental groups $\Pi_1^k$ for $k\in\{6,18,26\}$ for each of these images are trivial groups. These computations are attempted in \cite[Lemma 3.3]{Han07}, but in each case the argument is incorrect or incomplete. 

We begin with $MSS_{18}$, which is simplest for $k=6$.
\begin{prop}
\label{Pi1MSS18}
Let $x \in MSS_{18}$. Then $\Pi_1^6(MSS_{18}, x)$ is trivial. 
\end{prop}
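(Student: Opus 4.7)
The plan is to reduce to a component analysis: a 6-loop based at $x$ and any loop-preserving homotopy between such loops must, by continuity, take values in the 6-component of $x$, so $\Pi_1^6(MSS_{18},x)$ depends only on this component. I would therefore first enumerate the 6-components of $MSS_{18}$, and then show each is 6-contractible so that Corollary~\ref{contractible-trivial} finishes the proof.

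The first step is a direct adjacency check. Using the $c_1$-criterion that two distinct points of $\Z^3$ are 6-adjacent iff they agree in two coordinates and differ by exactly $1$ in the third, one scans the ten listed points pairwise. For each $c_i$ one finds that at most one other $c_j$ matches this condition. Concretely, $c_0=(0,0,0)$ and $c_3=(0,3,0)$ have no 6-neighbors inside $MSS_{18}$, while the remaining eight points pair up as $\{c_1,c_2\}$, $\{c_4,c_5\}$, $\{c_6,c_7\}$, $\{c_8,c_9\}$, each pair differing in exactly one coordinate by $1$. Thus under 6-adjacency $MSS_{18}$ splits into six components, each of which is a single point or a two-point path.

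The second step is to observe that every such component is 6-contractible: a singleton is trivially contractible, and a two-point $\kappa$-connected image is contractible since one can homotope the identity to the constant map at either endpoint in a single time step. Each component therefore has trivial fundamental group by Corollary~\ref{contractible-trivial}.

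Finally, I would formalize the reduction to the component containing $x$. A 6-loop $f:[0,m]_\Z\to MSS_{18}$ with $f(0)=f(m)=x$ has connected image (being the continuous image of a connected domain), so its image lies in the 6-component $C_x$ of $x$; similarly, any homotopy holding endpoints fixed at $x$ takes values in $C_x$. Hence $\Pi_1^6(MSS_{18},x)=\Pi_1^6(C_x,x)$, which by the previous step is trivial. There is no real obstacle here aside from the bookkeeping of the adjacency check, which is why this proposition is simpler than its $k=18,26$ analogues.
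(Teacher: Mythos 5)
Your proof is correct, and it follows the same basic strategy as the paper's --- reduce everything to the 6-component of the basepoint --- but your component analysis is the accurate one. The paper's proof simply asserts that the 6-component of every point of $MSS_{18}$ is a singleton, so that every 6-loop is constant; this is false as stated, since for example $c_1=(1,1,0)$ and $c_2=(1,2,0)$ agree in two coordinates and differ by exactly $1$ in the third, hence are 6-adjacent (the same holds for $\{c_4,c_5\}$, $\{c_6,c_7\}$, $\{c_8,c_9\}$). The singleton claim is true for $MSS_{18}'$, where the paper reuses the identical sentence, and seems to have been carried over incorrectly. Your enumeration --- the isolated points $c_0,c_3$ together with four two-point components --- is what one actually gets, and your additional step (a two-point connected image is pointed 6-contractible, so Corollary~\ref{contractible-trivial} applies to each component, and loops and endpoint-fixing homotopies based at $x$ cannot leave the component of $x$) is exactly what is needed to close the resulting gap. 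The proposition itself is unaffected, but your write-up is the more careful of the two.
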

\begin{proof}
The 6-component of $x$ in $MSS_{18}$ is $\{x\}$.
Thus, every 6-loop in $(MSS_{18},x)$ is a trivial
loop, and the assertion follows.
\end{proof}

For $18$-adjacency we will use the following lemma:

\begin{lem}\label{avoidpoint}
Let $f:[0,m]_\Z \to MSS_{18}$ be a $c_0$-based 18-loop. Then there is a $c_0$-based 18-loop $f': [0,m]_\Z \to MSS_{18} \setminus \{c_3\}$ with $[f]=[f']$ in $\Pi_1^{18}(MSS_{18},c_0)$. 
\end{lem}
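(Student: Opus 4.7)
The plan is to remove visits to $c_3$ from $f$ one at a time by a sequence of pointed homotopies; after finitely many iterations we obtain a loop $f'$ avoiding $c_3$ with $[f'] = [f]$ in $\Pi_1^{18}(MSS_{18}, c_0)$. Because each reduction preserves the length of the loop, no trivial extensions will be needed.

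First I would determine the $18$-neighbors of $c_3 = (0,3,0)$ in $MSS_{18}$, which are exactly $c_2, c_4, c_7, c_8$. Under $18$-adjacency these four points form a $4$-cycle $c_2 \adj c_7 \adj c_4 \adj c_8 \adj c_2$; the only non-adjacent pairs are $\{c_2, c_4\}$ and $\{c_7, c_8\}$. The geometric input for the whole argument is the following common-neighbor property: every pair of points in $\{c_2, c_4, c_7, c_8\}$ either is equal, is $18$-adjacent, or shares a common $18$-neighbor inside $\{c_2, c_4, c_7, c_8\}$, since $c_2, c_4$ share $c_7$ and $c_8$, while $c_7, c_8$ share $c_2$ and $c_4$.

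The reduction step proceeds as follows. If $f$ ever takes the value $c_3$, pick a maximal subinterval $[a, b] \subset [1, m-1]_\Z$ on which $f \equiv c_3$; this sits away from the endpoints since $f(0) = f(m) = c_0 \ne c_3$. Set $u = f(a-1)$ and $v = f(b+1)$, both in $\{c_2, c_4, c_7, c_8\}$ by continuity and maximality. Using the common-neighbor property, choose $w \in \{c_2, c_4, c_7, c_8\}$ which is equal to or $18$-adjacent to each of $u$ and $v$, and define $g$ by setting $g \equiv w$ on $[a, b]$ and $g = f$ elsewhere. Then $g$ is a $c_0$-based $18$-loop with strictly fewer $c_3$-visits than $f$, and the one-step homotopy $F : [0, m]_\Z \times [0, 1]_\Z \to MSS_{18}$ with $F(\cdot, 0) = f$, $F(\cdot, 1) = g$ witnesses $[g] = [f]$: its spatial slices are $f$ and $g$, its time slices $F(t, \cdot)$ are either constant or the two-point path $c_3, w$ (which is $(2,18)$-continuous since $c_3 \adj w$), and it holds the endpoints fixed because $F(0, s) = F(m, s) = c_0$ throughout. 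Iterating yields the desired $f'$.

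The main obstacle I anticipate is the coordinate-by-coordinate verification of the common-neighbor property in the link of $c_3$; once that geometric fact is pinned down, everything else is routine homotopy bookkeeping.
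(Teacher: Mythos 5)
Your proposal is correct and follows essentially the same strategy as the paper: eliminate visits to $c_3$ by one-step pointed homotopies that push the loop onto a point of the link $\{c_2,c_4,c_7,c_8\}$, using the fact that any two points of this link are equal, $18$-adjacent, or joined through a common neighbor within the link. The only (harmless) difference is that you replace a maximal block of consecutive $c_3$-values at once, whereas the paper first discards trivial extensions and then replaces single occurrences of $c_3$ one at a time; your explicit verification of the $4$-cycle structure of the link is a welcome addition to what the paper merely asserts.
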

\begin{proof}
We may assume that $f$ is not a trivial extension of another loop. Let $t$ be the minimal number with $f(t)=c_3$. Since $f$ is a loop based at $c_0$, and $c_0$ is distant from $c_3$ in $MSS_{18}$, we must have $3 \le t \le m-3$. Since $f$ is not a trivial extension of another loop, we must have $f(t-1) \adj c_3 \adj f(t+1)$, where $\adj$ means 18-adjacent (and not equal). 

Examining the structure of $MSS_{18}$ we see that there will always be some point $c \in \{c_2,c_4,c_8,c_7\}$ with $f(t-1) \adjeq c \adjeq f(t+1)$, where $\adjeq$ means ``equal or 18-adjacent.'' Now define $f_1:[0,m]_\Z \to MSS_{18}$ by:
\[ 
f_1(x) = \begin{cases}
f(x) & \text{ if } x \neq t, \\
c & \text{ if } x = t.
\end{cases} \]
By our choice of $c$, this $f_1$ will be continuous, and is 18-homotopic to $f$ in one time step. Because $3\le t \le m-3$, this homotopy holds the endpoints fixed. Thus $[f]=[f_1]$ in $\Pi_1^{18}(MSS_{18})$.

Note that $f_1$ meets the point $c_3$ one time fewer than $f$ does. By applying the  construction above, again, to $f_1$, we obtain a loop $f_2$ with $[f_2]=[f]$ which meets the point $c_3$ two fewer times than $f$ does. Iterating this construction eventually gives a loop $f'$ with $[f']=[f]$ which never meets $c_3$, as desired.
\end{proof}

The following statement corresponds to \cite[Lemma 3.3 (1)]{Han07}. The argument given for proof in \cite{Han07} merely demonstrates a specific 18-loop in $MSS_{18}$ and shows that it is contractible, rather than showing that all such loops are contractible holding the endpoints fixed.

\begin{prop}
\label{correct-3-3-1}
Let $x\in MSS_{18}$. Then $\Pi_1^{18}(MSS_{18},x)$ is a trivial
group.
\end{prop}
\begin{proof}
It suffices to consider the case where $x=c_0$. Let $f:[0,M]_\Z \to MSS_{18}$ be a loop based at $c_0$.
We will show that $[f]=[\bar c_0]$, where $\bar c_0$ is the constant path at $c_0$. By Lemma \ref{avoidpoint} we have a loop $f'$ with $[f]=[f']$ and $c_3 \not\in f'([0,M]_\Z)$. 

For $t\in \Z$, let $Q_t: \Z^3 \to \Z^3$ be defined by $Q_t(a,b,c) = (a, \min(b,t), c)$. Since $f'$ avoids $c_3$, we have $Q_2\circ f' = f'$. It is also clear that $f' \simeq Q_1 \circ f'$ by a one-step homotopy, and that $Q_1\circ f'$ is a path meeting only points that are adjacent or equal to $c_0$. Thus $Q_1 \circ f' \simeq \bar c_0$ by a one-step homotopy, where $\bar c_0$ is the constant path at $c_0$. All of these homotopies fix the basepoint, and so we have $[f]=[f']=[\bar c_0]$ as desired.
\end{proof}

Our final case for $MSS_{18}$ uses $26$-adjacency. Informally, since we have already shown that all loops in $MSS_{18}$ are 18-contractible, it should follow that they are 26-contractible, since any 18-contraction is also automatically a 26-contraction.
Further, we can easily see that every 26-loop in $MSS_{18}$ is 26-homotopic in 1 step to an 18-loop in $MSS_{18}$
with the homotopy holding the endpoints fixed.
This intuition leads to the following lemma which is quite general. 
Below, $\adjeq_\kappa$ means ``$\kappa$-adjacent or equal''. 
\begin{lem}\label{pi1-different-adj}
Let $X$ be a digital image with two adjacency relations $\kappa$ and $\lambda$. Assume that if $x\adjeq_\kappa y$ then $x \adjeq_\lambda y$, and that if $x\adjeq_\lambda y$ then there is a $\kappa$-path in $X$ of length 2 from $x$ to $y$. If $\Pi_1^\kappa(X,x_0)$ is trivial for some $x_0\in X$, then $\Pi_1^\lambda(X,x_0)$ is trivial.
\end{lem}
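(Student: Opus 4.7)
The plan is to convert an arbitrary $\lambda$-loop in $X$ based at $x_0$ into a $\kappa$-loop based at $x_0$ by inserting the intermediate points guaranteed by the second hypothesis. Triviality of $\Pi_1^\kappa(X, x_0)$ then gives a $\kappa$-nullhomotopy, which the first hypothesis lets us reinterpret as a $\lambda$-nullhomotopy; a short one-step homotopy carries the conclusion back from the $\kappa$-loop to the original $\lambda$-loop.

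Let $f: [0, m]_\Z \to X$ be any $\lambda$-loop based at $x_0$. For each $t \in [0, m-1]_\Z$ the points $f(t)$ and $f(t+1)$ are $\lambda$-adjacent or equal, so by the second hypothesis there is $y_t \in X$ with $f(t) \adjeq_\kappa y_t \adjeq_\kappa f(t+1)$ (taking $y_t = f(t)$ if $f(t)=f(t+1)$). Define $g: [0, 2m]_\Z \to X$ by $g(2t) = f(t)$ and $g(2t+1) = y_t$. Consecutive values of $g$ are $\kappa$-adjacent or equal, so $g$ is a $(2, \kappa)$-continuous loop based at $x_0$. Let $f': [0, 2m]_\Z \to X$ be the ``slowed-down'' trivial extension of $f$ defined by $f'(2t) = f'(2t+1) = f(t)$ for $0 \le t \le m-1$ and $f'(2m) = x_0$, so that $[f] = [f']$ in $\Pi_1^\lambda(X, x_0)$.

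Next, define $H: [0, 2m]_\Z \times [0,1]_\Z \to X$ by $H(s, 0) = f'(s)$ and $H(s, 1) = g(s)$. At even $s = 2t$ the two loops agree (both equal $f(t)$), so $H(s, \cdot)$ is constant; at odd $s = 2t+1$ the values are $f(t)$ and $y_t$, which are $\kappa$-adjacent or equal and hence $\lambda$-adjacent or equal by the first hypothesis. The rows $H(\cdot, 0) = f'$ and $H(\cdot, 1) = g$ are both $\lambda$-paths, and $H(0, \cdot) = H(2m, \cdot) = x_0$. Thus $H$ is a $\lambda$-homotopy holding the basepoint fixed, showing $[f'] = [g]$ in $\Pi_1^\lambda(X, x_0)$.

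Finally, since $\Pi_1^\kappa(X, x_0)$ is trivial, $g$ is $\kappa$-homotopic (after trivial extensions) to $\overline{x_0}$ with endpoints held fixed. The first hypothesis ensures that every $(\kappa,\kappa)$-continuous map is also $(\lambda,\lambda)$-continuous, so this $\kappa$-homotopy is automatically a $\lambda$-homotopy, giving $[g] = [\overline{x_0}]$ in $\Pi_1^\lambda(X, x_0)$. Combining, $[f] = [f'] = [g] = [\overline{x_0}]$, and since $f$ was arbitrary, $\Pi_1^\lambda(X, x_0)$ is trivial. The only delicate bookkeeping is verifying that $f'$ meets the strict requirements of Definition~\ref{triv-extension} (decompose $f$ into its length-one subpaths and interleave trivial loops at each $f(t)$); no real obstacle arises anywhere else.
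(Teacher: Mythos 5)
Your proof is correct and follows essentially the same route as the paper's: double the $\lambda$-loop by a trivial extension, use a one-step endpoint-fixing homotopy to replace the odd-indexed points by the midpoints of the length-2 $\kappa$-paths (yielding a $\kappa$-loop), and then invoke triviality of $\Pi_1^\kappa$ together with the fact that $\kappa$-homotopies are $\lambda$-homotopies. The only difference is cosmetic ordering of the construction, and your closing remark about verifying the trivial-extension bookkeeping is exactly the right check.
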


\begin{proof}
Let $h: [0,m]_{\Z} \to X$ be a
$\lambda$-loop based at $x_0$.
Let $\bar h: [0,2m]_{\Z} \to X$ be
the trivial extension of $h$ defined
by
\[ \bar h(s) = \left \{ \begin{array}{ll}
    h(s/2) & \mbox{if } s \mbox{ is even;} \\
    \bar h(s-1) & \mbox{if } s \mbox{ is odd.}
\end{array} \right .
\]
Since $\bar h(2u) \adjeq_{\lambda} \bar h(2(u+1))$, 
there is a $\kappa$-path, which is therefore a $\lambda$-path,
of length 2
%from $\bar h(2u)$ to some $x_u \in X$ to $\bar h(2(u+1))$. Therefore,
from $\bar h(2u)$ to $\bar h(2(u+1))$ through some $x_u \in X$. Therefore,
the function
$H: [0,2m]_{\Z}\times [0,1]_{\Z} \to X$ defined by
\[ H(s,t)= \left \{ \begin{array}{ll}
  \bar h(s) & \mbox{if } s \mbox{ is even;} \\
  \bar h(s) & \mbox{if } s \mbox{ is odd and } t=0; \\
  x_u & \mbox{if } s=2u+1 \mbox{ and } t=1,
\end{array} \right .
\]
is a $\lambda$-homotopy from $\bar h$ to a
$\kappa$-loop $h'$ that keeps the
endpoints fixed. Therefore, we have
\begin{equation}
\label{h-h'}
[h]_{\lambda} = [\bar h]_{\lambda}
= [h']_{\lambda},
\end{equation}
where the subscript $\lambda$ indicates that we are considering the loop class in $\Pi_1^\lambda(X,x_0)$. 

Since $\Pi_1^{\kappa}(X,x_0)$ is
trivial, there is a trivial extension
$h''$ of $h'$ that is $\kappa$-homotopic, hence $\lambda$-homotopic, to
a trivial loop $\overline{x_0}$ keeping the
endpoints fixed. Therefore,
$[h']_{\lambda}=[h'']_{\lambda}=[\overline{x_0}]_{\lambda}$. With equation~(\ref{h-h'}), this implies
$[h]_{\lambda}=[\overline{x_0}]_{\lambda}$.
The assertion follows.
\end{proof}

The hypothesis above concerning paths of length 2 could possibly be weakened, but some form of this restriction is necessary. As a counterexample to a more general statement, consider the example in Figure \ref{48example}. Here $X\subset \Z^2$, and 4-adjacency implies 8-adjacency, but $\Pi_1^4(X)$ is trivial and $\Pi_1^8(X)$ is infinite cyclic.
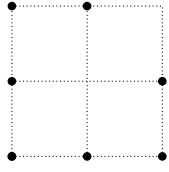
\begin{figure}
\[
\begin{tikzpicture}
\foreach \x in {0,...,2} {
  \draw[densely dotted] (\x,0) -- (\x,2);
  \draw[densely dotted] (0,\x) -- (2,\x);
  \node at (\x,0) [vertex, fill=black] {};
  \node at (0,\x) [vertex, fill=black] {};
  }
\node at (1,2) [vertex, fill=black] {};
\node at (2,1) [vertex, fill=black] {};
\end{tikzpicture}
\]
\caption{4-adjacency implies 8-adjacency, but $\Pi_1^4(X)$ is trivial while $\Pi_1^8(X)$ is not.\label{48example}}
\end{figure}

The lemma immediately leads to the following (which does not appear in \cite{Han07}): 

\begin{prop}
Let $x\in MSS_{18}$. Then $\Pi_1^{26}(MSS_{18},x)$ is a trivial group.
\end{prop}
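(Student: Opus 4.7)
The plan is to apply Lemma~\ref{pi1-different-adj} with $X = MSS_{18}$, $\kappa = 18$, and $\lambda = 26$, using Proposition~\ref{correct-3-3-1} as the triviality hypothesis on $\Pi_1^{\kappa}$. The first hypothesis of the lemma is immediate: since $c_u$-adjacency implies $c_{u+1}$-adjacency in general, any $18$-adjacent (or equal) pair is also $26$-adjacent (or equal).

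The second hypothesis requires that every $26$-adjacent-or-equal pair $x,y \in MSS_{18}$ be connected by an $18$-path of length $2$ in $MSS_{18}$. If $x = y$ or $x \adjeq_{18} y$, the constant extension $x \to x \to y$ works, so it suffices to examine the pairs that are $26$-adjacent but not $18$-adjacent, namely those differing in all three coordinates. A direct inspection of the ten points of $MSS_{18}$ shows that these ``purely $26$'' adjacencies are exactly the eight pairs $\{c_1,c_7\}, \{c_1,c_8\}, \{c_2,c_6\}, \{c_2,c_9\}, \{c_4,c_6\}, \{c_4,c_9\}, \{c_5,c_7\}, \{c_5,c_8\}$. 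For each such pair one can exhibit a common $18$-neighbor inside $MSS_{18}$: for instance $c_2$ is $18$-adjacent to both $c_1$ and $c_7$, giving the length-$2$ path $c_1 \to c_2 \to c_7$, and analogous intermediate points (e.g.\ $c_1$ for $\{c_2,c_9\}$, $c_5$ for $\{c_4,c_6\}$, and their obvious symmetric analogues) handle the remaining cases.

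With both hypotheses verified, Lemma~\ref{pi1-different-adj} together with Proposition~\ref{correct-3-3-1} gives that $\Pi_1^{26}(MSS_{18},x)$ is trivial for every $x$. The only real labor is the finite case check of the second hypothesis, which I expect to be the main (though still routine) obstacle; the symmetries of $MSS_{18}$ under reflection in the $y$-axis and in the $xz$-plane reduce the eight cases to essentially one.
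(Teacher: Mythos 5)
Your proof is correct and follows exactly the paper's approach: apply Lemma~\ref{pi1-different-adj} with $\kappa=18$, $\lambda=26$, using the triviality of $\Pi_1^{18}(MSS_{18})$ from Proposition~\ref{correct-3-3-1}. The paper simply asserts the length-2 path condition by inspection, whereas you carry out the (correct) finite case check of the eight purely-26-adjacent pairs explicitly.
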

\begin{proof}
We will apply Lemma \ref{pi1-different-adj} with $\kappa = 18$ and $\lambda=26$. Observe that any two 26-adjacent points of $MSS_{18}$ can be connected by a $18$-path of length 2. Then since $\Pi_1^{18}(MSS_{18})$ is trivial, Lemma \ref{pi1-different-adj} shows that $\Pi_1^{26}(MSS_{18})$ is trivial.
%Let $f:[0,M]_\Z \to MSS_{18}$ be a 26-loop based at $x$. We will show that $[f]=[\bar x]$, where $\bar x$ is the constant loop at $x$. Let $\bar f:[0,2M]_\Z \to MSS_{18}$ be the trivial extension of $f$ which ``doubles'' each point: $\bar f(t) = f(\lfloor t/2 \rfloor)$. Then $\bar f$ is homotopic in one time step to an 18-loop $g$. This $g$ is obtained by changing any ``solid diagonal'' adjacency in the path $f$ into a pair of 18-adjacencies. For example if the 26-path $f$ travels from $c_2$ to $c_9$, then $\bar f$ includes the sequence of points $(c_2, c_2, c_9, c_9)$, which can be changed by homotopy to $(c_2, c_1, c_9, c_9)$, which uses only 18-adjacencies.
%
%Since $g$ is an 18-loop, it is pointed 18-nullhomotopic by Proposition \ref{correct-3-3-1}, and thus it is pointed 26-nullhomotopic since any 18-homotopy is automatically a 26-homotopy. Thus $[f]= [\bar f]=[g]=[\bar x]$ as desired.
\end{proof}

Since $MSS_{18}'=MSS_{26}'$, Proposition~\ref{MSS18'summary} below encompasses \cite[Lemma 3.3~(2), (4)]{Han07}, and agrees with the assertions given in that paper. The proofs given in \cite{Han07} are incomplete. The argument for Lemma 3.3 (2) claims only that $MSS'_{18}$ is contractible, implying the use of a result like Corollary \ref{contractible-trivial} to complete the proof. No result like Corollary \ref{contractible-trivial} appears in \cite{Han07}; in fact our proof of Corollary \ref{contractible-trivial} depends on a nontrivial recent result in~\cite{BoSt}.
%But since it depends on a correction that appeared in~\cite{BoSt} of a previously published assertion, a correct proof of Corollary~\ref{contractible-trivial} was not given until the current paper. 
%We explain below why $MSS'_{18}$ is in fact pointed contractible. 
%(By 
%Corollary~\ref{contractible-trivial}, contractibility does imply trivial fundamental group.) 
The argument for Lemma 3.3 (4) in \cite{Han07} merely asserts without proof that every 6-loop in $MSS_k'$ is 6-nullhomotopic; further, the argument neglects to require such nullhomotopies to fix the endpoints.
\begin{prop}
\label{MSS18'summary}
Let $x\in MSS'_{18}$. Then $\Pi_1^k(MSS_{18}',x)$ is a trivial group, $k \in \{6,18,26\}$.
\end{prop}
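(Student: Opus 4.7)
\medskip
\noindent\textbf{Proof plan.} The plan is to handle the three values of $k$ separately.

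For $k=6$, I would imitate the proof of Proposition~\ref{Pi1MSS18}: inspection of the six coordinate triples shows that no two distinct points of $MSS_{18}'$ differ by $\pm 1$ in exactly one coordinate (every pair differs in two coordinates by $\pm 1$, or in a single coordinate by $\pm 2$). Hence no two points are $6$-adjacent, the $6$-component of any $x$ is $\{x\}$, every $6$-loop at $x$ is trivial, and $\Pi_1^6(MSS_{18}',x)$ is trivial.

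For $k=18$, the plan is to exhibit a pointed $18$-contraction of $(MSS_{18}',c_0)$ with $c_0=(0,0,0)$ and then apply Corollary~\ref{contractible-trivial}. Relabel the other five points $c_1=(1,1,0)$, $c_2=(0,2,0)$, $c_3=(-1,1,0)$, $c_4=(0,1,-1)$, $c_5=(0,1,1)$; the $18$-adjacency graph on $\{c_0,\dots,c_5\}$ is the octahedron, with non-edges exactly $\{c_0,c_2\}$, $\{c_1,c_3\}$, $\{c_4,c_5\}$. Define $H:MSS_{18}'\times[0,2]_\Z\to MSS_{18}'$ by letting $H(\cdot,0)$ be the identity, $H(\cdot,2)=\overline{c_0}$, and $H(\cdot,1)$ be the map sending $c_2\mapsto c_4$ and $c_5\mapsto c_0$ while fixing $c_0,c_1,c_3,c_4$. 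Continuity of $H(\cdot,1)$ is a short case-check on the twelve edges of the octahedron, using that $c_4$ is $18$-adjacent to each of $c_0,c_1,c_3$ and that $c_0$ is $18$-adjacent to each of $c_1,c_3,c_4,c_5$; continuity of each path $H(c_i,\cdot)$ is immediate since each uses steps of length $0$ or $1$ in the adjacency graph, and $H(c_0,t)=c_0$ by construction. Corollary~\ref{contractible-trivial} then gives $\Pi_1^{18}(MSS_{18}',x)=0$.

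For $k=26$, I would observe that no two points of $MSS_{18}'$ differ by $\pm 1$ in all three coordinates simultaneously, so on this set $26$-adjacency and $18$-adjacency coincide. Hence $\Pi_1^{26}(MSS_{18}',x)=\Pi_1^{18}(MSS_{18}',x)$, which is trivial by the previous step. (Alternatively, Lemma~\ref{pi1-different-adj} applies with $\kappa=18$, $\lambda=26$.)

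The main obstacle is designing the slice $H(\cdot,1)$ in the $k=18$ step: the naive ``collapse the antipode $c_2$ onto one of its equatorial neighbors'' is not $18$-continuous on its own, because the four equatorial neighbors of $c_2$ do not all sit in a common $18$-neighborhood. One is forced to move an additional point at the same time (here $c_5\mapsto c_0$) to restore continuity of the slice; once such an intermediate slice is in hand, the rest of the verification is routine bookkeeping on the octahedral graph.
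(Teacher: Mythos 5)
Your proposal is correct and follows essentially the same route as the paper: dispose of $k=6$ by noting the $6$-components are singletons, and reduce $k\in\{18,26\}$ to contractibility of $MSS_{18}'$ plus Corollary~\ref{contractible-trivial}. The only difference is that the paper cites Proposition~4.1 of~\cite{Boxer06} for the (pointed) contraction, whereas you construct the two-step contraction explicitly (and your intermediate slice checks out on all twelve edges of the octahedral adjacency graph) and handle $k=26$ by observing that $18$- and $26$-adjacency coincide on this point set rather than rerunning the contraction argument.
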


\begin{proof}

For $k=6$, the 6-component of $x$ in $MSS_{18}'$ is $\{x\}$.
Thus, every 6-loop in $(MSS_{18}',x)$ is a trivial
loop, and the assertion follows.

For $k\in \{18,26\}$, we observe that 
Proposition~4.1 of~\cite{Boxer06}  
shows that $MSS_{18}'$ is 26-contractible; indeed, its
proof shows that $MSS_{18}'$ is pointed 26-contractible;
and the same argument shows that $MSS_{18}'$ is pointed 
18-contractible. The assertion follows from 
Corollary~\ref{contractible-trivial}.
\end{proof}

\cite{Han07} states incorrectly in Lemma~3.3~(3) that $\Pi^6_1(MSS_6)$ is a free group with two generators. Specifically it is claimed that the following equatorial loop represents a nontrivial generator of $\Pi_1^6(MSS_6)$:
\[ D = ((0,0,1),(1,0,1),(2,0,1),(2,1,1),(2,2,1),(1,2,1),(0,2,1),(0,1,1),(0,0,1)) \]
But $D$ is in fact trivial in $\Pi_1^6(MSS_6)$. Consider the following sequence of loops, starting with a trivial extension of $D$:
{\footnotesize
\begin{align*}
((0,0,1),(0,0,1),(1,0,1),(2,0,1),(2,1,1),(2,2,1),(1,2,1),(0,2,1),(0,1,1),(0,0,1),(0,0,1)) \\
((0,0,1),(0,0,2),(1,0,2),(2,0,2),(2,1,2),(2,2,2),(1,2,2),(0,2,2),(0,1,2),(0,0,2),(0,0,1)) \\
((0,0,1),(0,0,2),(1,0,2),(2,0,2),(2,1,2),(2,1,2),(1,1,2),(0,1,2),(0,1,2),(0,0,2),(0,0,1)) \\
((0,0,1),(0,0,2),(1,0,2),(2,0,2),(2,0,2),(2,0,2),(1,0,2),(0,0,2),(0,0,2),(0,0,2),(0,0,1)) \\
((0,0,1),(0,0,2),(1,0,2),(1,0,2),(1,0,2),(1,0,2),(1,0,2),(0,0,2),(0,0,2),(0,0,2),(0,0,1)) \\
((0,0,1),(0,0,2),(0,0,2),(0,0,2),(0,0,2),(0,0,2),(0,0,2),(0,0,2),(0,0,2),(0,0,2),(0,0,1)) \\
((0,0,1),(0,0,1),(0,0,1),(0,0,1),(0,0,1),(0,0,1),(0,0,1),(0,0,1),(0,0,1),(0,0,1),(0,0,1)) \\
\end{align*}}
This sequence of loops gives a homotopy that holds the endpoints fixed, from a trivial extension of $D$ to a trivial loop, and thus $D$ represents a trivial fundamental group element. In fact, arguments similar to those used in Lemma \ref{avoidpoint} and Proposition \ref{correct-3-3-1} can be repeated  for $MSS_6$: any 6-loop can be first moved to avoid the point $(1,2,1)$, and then composed with $Q_t$ to contract fully. We obtain the following, which also follows
as a case of Theorem~3.1 of the paper~\cite{Boxer06}:
\begin{prop}
Let $x \in MSS_6$. Then $\Pi^6_1(MSS_6,x)$ is a trivial group.
\end{prop}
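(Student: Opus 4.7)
By Theorem \ref{basept-indep} it suffices to choose the basepoint $x_0 = (0,0,0)$. The plan is to imitate the template of Lemma \ref{avoidpoint} and Proposition \ref{correct-3-3-1}: first homotope any 6-loop $f$ so that it avoids the point $p = (1,2,1)$, then use the middle-coordinate projections $Q_t(a,b,c) = (a, \min(b,t), c)$ to slide the resulting loop into the face $\{b=0\}$, which is a $3 \times 3$ grid that contracts to $x_0$.

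The 6-neighbors of $p$ in $MSS_6$ are $N = \{(0,2,1), (2,2,1), (1,2,0), (1,2,2)\}$, all lying in the face $b=2$. For the avoidance lemma I would reduce the number of times $f$ passes through $p$ one at a time. Suppose $f(t) = p$; then $f(t-1), f(t+1) \in N$. If $f(t-1) = f(t+1)$, a single-point replacement of $f(t)$ by $f(t-1)$ gives a one-step homotopy with fixed endpoints. Otherwise $\{f(t-1), f(t+1)\}$ is either one of four ``adjacent'' pairs in $N$ (such as $\{(0,2,1), (1,2,0)\}$, joined by the common 6-neighbor $(0,2,0)$ in the face $b=2$), or one of two ``opposite'' pairs such as $\{(0,2,1), (2,2,1)\}$, for which no common non-$p$ neighbor exists. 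In both non-trivial cases I would first insert a trivial extension padding each of $f(t-1)$ and $f(t+1)$ by one copy, producing a five-point segment centered on $p$, and then apply a one-step homotopy whose target is a length-four detour around $p$ within the face $b=2$: e.g., $(0,2,1),(0,2,0),(1,2,0),(2,2,0),(2,2,1)$ for the opposite pair above. Each column of the homotopy modifies only one coordinate by at most one, so the homotopy is 6-continuous and fixes the endpoints. Iterating removes all passes through $p$.

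The second step closely follows Proposition \ref{correct-3-3-1}. Once $f'$ avoids both $p$ and the missing center point $(1,1,1)$, the maps $Q_1$ and $Q_0$ are 6-continuous self-maps of $MSS_6$ on the relevant domain that fix $x_0$: the only point of $MSS_6$ with $(a,c) = (1,1)$ other than $p$ or $(1,1,1)$ is $(1,0,1)$, so $Q_1$ cannot hit the missing center. One-step homotopies then connect $f'$, $Q_1 \circ f'$, and $Q_0 \circ Q_1 \circ f'$, each modifying the $b$-coordinate by at most one. The resulting loop lies in the convex $3 \times 3$ face $\{b=0\}$, where two further analogous projections in the $a$- and $c$-coordinates finish the contraction to $x_0$.

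The main obstacle is the opposite-pair case in the avoidance lemma, since the length-four detour around $p$ is the one ingredient that goes beyond the single-point neighbor swap used in Lemma \ref{avoidpoint}. Verifying that the trivial extension plus the one-step homotopy to the detour is loop-preserving, fixes the endpoints, and does not reintroduce a visit to $p$ is routine but must be checked explicitly.
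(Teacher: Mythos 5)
Your proposal is correct and follows essentially the same route the paper sketches for this proposition: first homotope the loop off the point $(1,2,1)$, then contract with the projections $Q_t$. In fact you supply a detail the paper's one-line sketch glosses over — for an ``opposite'' pair such as $\{(0,2,1),(2,2,1)\}$ the single-point replacement of Lemma~\ref{avoidpoint} is unavailable (the only common $6$-neighbor of the two points is $(1,2,1)$ itself), and your padded length-four detour within the face $b=2$ is exactly the right fix.
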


Immediately we can also compute the other fundamental groups of $MSS_6$ (this result does not appear in \cite{Han07}):
\begin{prop}
Let $x\in MSS_6$. Then $\Pi_1^k(MSS_6,x)$ is a trivial group for $k\in \{18,26\}$. 
\end{prop}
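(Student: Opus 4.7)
The plan is to deduce both claims from the already-established triviality of $\Pi_1^6(MSS_6,x)$ by a double application of Lemma~\ref{pi1-different-adj}: first move from $\kappa=6$ to $\lambda=18$, and then from $\kappa=18$ to $\lambda=26$. Since in each case the smaller adjacency is contained in the larger, only the second hypothesis of the lemma requires verification, namely that any two $\lambda$-adjacent points of $MSS_6$ can be joined by a $\kappa$-path of length~$2$ in $MSS_6$.

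For the step from $6$ to $18$, suppose $x,y \in MSS_6$ are $18$-adjacent but not $6$-adjacent, so they differ by $\pm 1$ in exactly two coordinates. Writing $x=(a_1,a_2,a_3)$ and $y=(b_1,b_2,b_3)$ with $a_3=b_3$ (without loss of generality), the two natural $6$-path intermediaries are $u=(b_1,a_2,a_3)$ and $v=(a_1,b_2,a_3)$. Both lie in $[0,2]_\Z^3$, and if both equalled $(1,1,1)$ we would force $a_1=b_1=1$, contradicting $|a_1-b_1|=1$. Hence at least one of $u,v$ lies in $MSS_6$ and provides the required $6$-path of length~$2$. Lemma~\ref{pi1-different-adj} then gives that $\Pi_1^{18}(MSS_6,x)$ is trivial.

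For the step from $18$ to $26$, suppose $x,y \in MSS_6$ are $26$-adjacent but not $18$-adjacent; then they differ by $\pm 1$ in all three coordinates and sit as opposite corners of a $2\times 2\times 2$ subcube $C \subset [0,2]_\Z^3$. The six points of $C \setminus \{x,y\}$ are each $18$-adjacent (in fact $6$- or $18$-adjacent) to both $x$ and $y$, and at most one of them can equal $(1,1,1)$, so at least five of them lie in $MSS_6$ and yield an $18$-path of length~$2$ from $x$ to $y$. A second application of Lemma~\ref{pi1-different-adj}, now with $\kappa=18$ and $\lambda=26$, using the already-proven triviality of $\Pi_1^{18}(MSS_6,x)$, shows that $\Pi_1^{26}(MSS_6,x)$ is trivial.

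There is no real obstacle here; the only thing to watch is that the deleted point $(1,1,1)$ never blocks both candidate intermediaries at once, and the elementary coordinate arguments above handle that. One could alternatively try to imitate the explicit contraction-style argument used for $\Pi_1^6(MSS_6,x)$ directly at the $26$-level, but routing through Lemma~\ref{pi1-different-adj} twice is both shorter and uniform.
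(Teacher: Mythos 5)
Your proposal is correct and follows essentially the same route as the paper: a double application of Lemma~\ref{pi1-different-adj}, first with $(\kappa,\lambda)=(6,18)$ and then with $(\kappa,\lambda)=(18,26)$. The paper leaves the length-$2$ path hypothesis to the reader, whereas you verify explicitly that the deleted point $(1,1,1)$ cannot block all candidate intermediaries; this is a worthwhile addition but not a different argument.
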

\begin{proof}
For $k=18$, apply Lemma \ref{pi1-different-adj} using $\kappa=6$ and $\lambda=18$.

For $k=26$ we apply Lemma \ref{pi1-different-adj} using $\kappa=18$ and $\lambda = 26$.
\end{proof}

\section{Fundamental groups for connected sums of
digital 2-spheres}
\label{HanThm3-4}

Han~\cite{Han06} defines the {\em connected sum} of two digital surfaces $X$ and
$Y$, denoted $X \sharp Y$. Roughly, the
idea behind this operation is that one
removes the interior of a minimal (depending on the adjacency used) simple closed curve from each of $X$
and $Y$ such that there is an isomorphism $F$ between
these simple closed curves, and sews together the 
remainders of $X$ and $Y$ along these simple close
curves by identifying points that are matched by $F$. This minimal simple closed curve, together with its interior, is denoted $A_k$, and called a \emph{digital disk}. \cite{Han07} uses three different digital disks, shown in Figure \ref{hanDisks}. See~\cite{Han06} for details of
the definition of the $\sharp$ operation.

Combining the spheres from Figure \ref{3surfs} by the $\sharp$ operation gives new digital images, shown in Figure \ref{MSS6sharpMSS6}. In this section we show that both of these images have trivial fundamental groups.
% The same computation is attempted in \cite{Han07}, but again the proofs are often incomplete or incorrect.

%We begin with $MSS_6\sharp MSS_6$, which has the following form (see Figure~\ref{MSS6sharpMSS6}):
%\[ MSS_6 \sharp MSS_6=
%   ([0,4]_{\Z} \times [0,2]_{\Z}^2)
%   \setminus \{(1,1,1),(1,1,2),(1,1,3)\}.
% \]
Theorem~3.4(1) of~\cite{Han07} asserts
that $\Pi_1^6(MSS_6 \sharp MSS_6)$ is
a group with two generators, but this is a propagation of errors from the computation of $\Pi_1^6(MSS_6)$ in that paper.
Theorem~3.4(2) of~\cite{Han07} says $\Pi_1^{18}(MSS_{18}\sharp MSS_{18})$ is trivial but justifies it only by showing that a single 18-loop is contractible.
In fact, following again the arguments used for Lemma \ref{avoidpoint} and Proposition \ref{correct-3-3-1} we obtain a
correct proof for the following,
corresponding to Theorem~3.4(1) and
Theorem~3.4(2) of~\cite{Han07}.

\begin{thm}
\label{MSS6sharpSq}
$\Pi_1^6(MSS_6 \sharp MSS_6)$ and $\Pi_1^{18}(MSS_{18}\sharp MSS_{18})$ are trivial groups. $\Box$
\end{thm}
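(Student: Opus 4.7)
The plan is to adapt, twice, the two-step strategy from Lemma~\ref{avoidpoint} and Proposition~\ref{correct-3-3-1}: first push any loop off a small set of ``apex'' points via a basepoint-fixing homotopy, then collapse what remains using a coordinate-clamping projection of the kind used with $Q_t$.

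For $\Pi_1^6(MSS_6 \sharp MSS_6)$, I fix a concrete realization in which the two copies of $MSS_6$ share a common face disk $A_6$ (with the interior removed from each), and pick a basepoint on the shared gluing curve. Each hemisphere carries one ``apex'' vertex opposite the gluing face, playing the role of $(1,2,1) \in MSS_6$ in the argument sketched just before the theorem. I would first prove an analogue of Lemma~\ref{avoidpoint} for each apex separately: whenever a loop visits the apex, both neighbors in the loop are 6-adjacent to that apex, and inspection of the local structure supplies a single common replacement point one step toward the equator, giving a one-step homotopy that fixes the endpoints (the endpoints sit on the equator, hence at distance at least $3$ from either apex, so the estimates from Lemma~\ref{avoidpoint} go through). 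Apply this to both apices. Once both are avoided, the image of the loop lies in a subset of $MSS_6 \sharp MSS_6$ that retracts onto the equatorial disk via the analogue of $Q_t$ applied on each hemisphere, and a short inspection shows that disk is 6-contractible to the basepoint.

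For $\Pi_1^{18}(MSS_{18} \sharp MSS_{18})$, the same outline works with the apex $c_3$ of each copy playing the role that $c_3$ played in Lemma~\ref{avoidpoint}; that lemma then applies essentially verbatim inside each hemisphere. Once the loop avoids both apex points, the remaining image lies in a union of two equatorial shells glued along a simple closed curve, and the $Q_1$-style argument from Proposition~\ref{correct-3-3-1}, now applied in the coordinate normal to the gluing face in each hemisphere, contracts the loop to the basepoint while holding endpoints fixed.

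The main obstacle I anticipate is the interface between the two hemispheres: a one-step replacement that is legal inside one copy may accidentally try to push the loop out of that copy altogether. I would handle this by performing the two apex-avoidance homotopies in sequence and restricting each to the finitely many maximal sub-intervals of $[0,m]_{\Z}$ on which the loop stays inside the relevant hemisphere (this restriction is harmless because the apex being avoided is distant from the gluing curve, so all apex-visits occur strictly in the interior of such a sub-interval). A secondary, purely bookkeeping, issue is verifying that the two constructions are compatible after the projection step, but this is immediate once both apices have been cleared because the projection acts as the identity on the equatorial disk, on which the remaining loop already lies.
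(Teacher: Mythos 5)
The paper itself gives no proof here beyond the remark that the arguments of Lemma~\ref{avoidpoint} and Proposition~\ref{correct-3-3-1} can be repeated, so your proposal is being judged on its own terms --- and it has a real structural flaw in the $MSS_6\sharp MSS_6$ case. The connected sum removes the \emph{interior} of the gluing disk $A_6\approx MSC_4^*$ from each copy and retains only $\delta(A_6)$; in the concrete realization $\delta([0,4]_\Z\times[0,2]_\Z^2)$ of Figure~\ref{MSS6sharpMSS6} the ``equatorial'' cross-section is the $8$-point simple closed curve $\delta(MSC_4^*)$, not a disk. A simple closed curve of more than $4$ points is not $6$-contractible (the paper uses exactly this fact in Example~\ref{contract-not-ptd}), so your final step --- clamp both hemispheres onto the equator and then contract ``the equatorial disk'' --- cannot be completed: the equatorial loop itself survives your first two steps unchanged and you are left with a possibly essential loop inside a non-contractible curve. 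The contraction has to be aimed at one of the \emph{end} faces instead (say $\{4\}\times[0,2]_\Z^2$, which is a genuine filled $3\times 3$ square containing its center and is pointed contractible); take the basepoint there, push the loop off the single far apex $(0,1,1)$, and clamp the long coordinate upward with maps $R_t(y,z,x)=(\max(y,t),z,x)$ for $t=1,\dots,4$. (For $MSS_{18}\sharp MSS_{18}$ your equatorial target happens to be a $4$-point curve, which is contractible, so that half of your argument can be repaired more easily, but the end-face version is cleaner and uniform.)

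A secondary gap: your apex-avoidance step for $6$-adjacency asserts that whenever the loop visits an apex there is a \emph{single} common replacement point adjacent to both neighboring values. This fails when the loop passes straight through the apex, e.g.\ $(0,0,1)\to(0,1,1)\to(0,2,1)$: the only point of $\Z^3$ that is $6$-adjacent to both $(0,0,1)$ and $(0,2,1)$ is the apex itself, since $6$-neighborhoods of points at $\ell_1$-distance $2$ along a coordinate axis intersect only in the midpoints, and the other midpoint $(1,1,1)$-analogue is absent. Unlike the $18$-adjacency situation of Lemma~\ref{avoidpoint}, where the diamond of neighbors of $c_3$ is richly interconnected, here you need a trivial extension and a two-step detour around the deleted center (e.g.\ through $(0,0,0)$ and $(0,1,0)$), still doable with endpoints fixed since the basepoint is far from the apex, but not by the one-point substitution you describe.
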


\begin{comment}
\begin{proof}
Without loss of generality, we take
$d_4=(1,4,1)$ as the base point (this notation comes from \cite{Han07}. Let 
$f: [0,m]_{\Z} \to MSS_6 \sharp MSS_6$ be
a $d_4$-based 6-loop. We use an argument
similar to the one used in the proof of
Proposition~\ref{correct-3-3-1}. There
is a homotopy
$H: [0,M]_{\Z} \times [0,6]_{\Z} \to 
    MSS_6 \sharp MSS_6$ with the
    following properties.
\begin{itemize}
\item $H(x,0)=f_0(x)$ where $f_0$ is a
      trivial extension of $f$.
 \item $H(0,1)=H(M,1)=d_4$.
\item For all $x \in [0,M]_{\Z}$,
%      $H(x,1) \neq d_{10}=(0,1,1)$.
      $H(x,1) \neq (1,0,1)$.
\item For all $x \in [0,M]_{\Z}$ and
      $t \in [2,5]_{\Z}$, 
      $H(x,t) \in [t-1,4]_{\Z}\times [0,2]_{\Z}^2$. This can be done by
      requiring that if 
      $H(x,t-1)=(a,b,c)$ then
      $H(x,t)=(a,\min\{b+1,4\},c)$.\todo{coords are messed up here}
      
\item For all $x \in [0,M]_{\Z}$, 
      $H(x,5) \in \{d_3=(4,1,0), 
      d_4=(4,1,1), d_5=(4,1,2)\}$. This 
      can be done by requiring that
      if $H(x,4)=(4,b,c)$, then
      $H(x,5)=(4,1,c)$.
\item For all $x \in [0,M]_{\Z}$, 
      $H(x,6)= d_4$.
\item The properties listed above imply 
      that $H$ holds the endpoints 
      fixed.
\end{itemize}
It is easily seen that $H$ is a 6-nullhomotopy of $f_0$ that holds the endpoints fixed. The assertion follows.
\end{proof}
\end{comment}
\begin{figure}
\begin{center}
\begin{tabular}{ccc}
\begin{tikzpicture}
\foreach \x in {-1,...,1} {
  \foreach \y in {0,1} {
     \node at (\x,\y) [vertex, fill=black] {};
}}
\node at (0,-1) [vertex, fill=black] {};
\node at (0,2) [vertex, fill=black] {};
\draw (0,-1) -- (1,0) -- (1,1) -- (0,2) -- (-1,1) -- (-1,0) -- cycle;
\draw (0,-1) -- (0,2);
\foreach \y in {0,1} {
	\draw (-1,\y) -- (1,\y);
	}
\draw (-1,1) -- (0,0) -- (1,1);
\draw (-1,0) -- (0,1) -- (1,0);
\end{tikzpicture}
&
\begin{tikzpicture}
\def \pts{1/0, 0/1, -1/0, 0/-1}
\foreach \x/\y in \pts {
            \node at (\x,\y) [vertex, fill=black] {};
}
\node at (0,0) [vertex, fill=black] {};
\draw (0,1) -- (0,-1);
\draw (1,0) -- (-1,0);
\draw (1,0) \foreach \x\y in \pts { -- (\x,\y)} -- cycle;
\end{tikzpicture}
&
\begin{tikzpicture}
\foreach \x in {0,...,2} {
  \draw (\x,0) -- (\x,2);
  \foreach \y in {0,...,2} {
            \node at (\x,\y) [vertex, fill=black] {};
}}
\foreach \y in {0,...,2} {
  \draw (0,\y) -- (2,\y);
}
\end{tikzpicture}
\\
$MSC^*_8$ & $MSC'^*_8$ & $MSC^*_4$
\end{tabular}
\end{center}
\caption{Various ``digital disks'' that are used to form connected sums}
\label{hanDisks}
\end{figure}
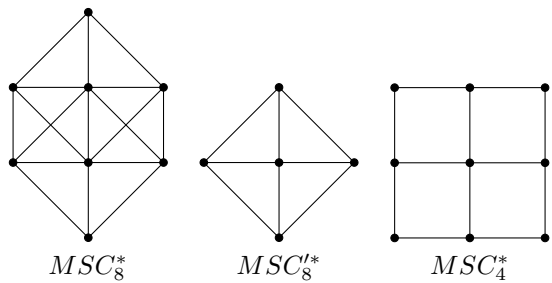

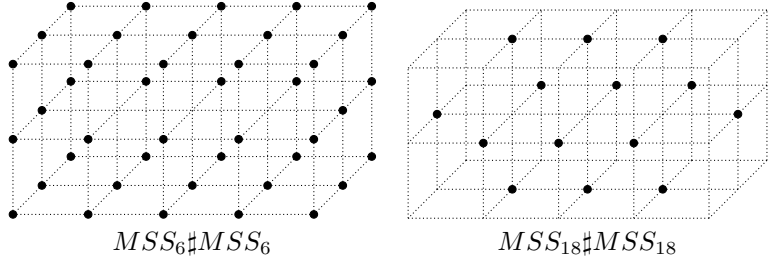
\begin{figure}
\begin{center}\begin{tabular}{cc}
\begin{tikzpicture} % MSS_6 # MSS_6
\xyzgrid{0}{2}{0}{4}{0}{2};
\foreach \x in {0,...,2} {
  \foreach \y in {0,...,4} {
    \foreach \z in {0,2} {
            \node at (\y,\z,\x) [vertex, fill=black] {};
}}}
\foreach \x in {0,2} {
  \foreach \y in {0,...,4} {
    \foreach \z in {1} {
            \node at (\y,\z,\x) [vertex, fill=black] {};
}}}
\foreach \x/\y/\z in {1/0/1, 1/4/1} {
            \node at (\y,\z,\x) [vertex, fill=black] {};
}
%\node at (4,1,1) [label=$d_4$] {};
%\node at (4,1,2) [label=$d_3$] {};
%\node at (4,1,0) [label=$d_5$] {};
%    
\end{tikzpicture}
&
\begin{tikzpicture} % MSS_18 # MSS_18
\xyzgrid{0}{2}{0}{4}{0}{2};
\foreach \x in {0,2} {
  \foreach \y in {1,...,3} {
    \foreach \z in {1} {
            \node at (\y,\z,\x) [vertex, fill=black] {};
}}}
\foreach \x in {1} {
  \foreach \y in {1,...,3} {
    \foreach \z in {0,2} {
            \node at (\y,\z,\x) [vertex, fill=black] {};
}}}
\foreach \x/\y/\z in {1/0/1, 1/4/1} {
            \node at (\y,\z,\x) [vertex, fill=black] {};
}
\end{tikzpicture} \\
$MSS_6\sharp MSS_6$ & $MSS_{18} \sharp MSS_{18}$
\end{tabular}
\end{center}
\caption{Connected sums of some images from Figure \ref{3surfs}.}
\label{MSS6sharpMSS6}
\end{figure}

%asserts
%correctly that $\Pi_1^6(MSS_{18} \sharp MSS_{18})$ is a trivial group. The ``proof''
%for this assertion in~\cite{Han07} lacks
%the requirement that nullhomotopies of
%loops hold the endpoints fixed. A correct
%proof for this assertion can be given that
%is similar to the proof of
%Theorem~\ref{MSS6sharpSq}.

%\begin{figure}
%\includegraphics[height=1.5in]{msc8star.eps}
%\caption{$MSC_8^*$.}
%\label{msc8star}
%\end{figure}

Theorem~3.4(3) and Theorem~3.4(4) of~\cite{Han07} both make assertions
about $\Pi_1^{18}(SS_{18})$.
However, $SS_{18}$ is not defined in~\cite{Han07}.

If ``$SS_{18}$'' is interpreted as ``$MSS_{18}$'', then Theorem~3.4(3) 
of~\cite{Han07} would say that
$\Pi_1^{18}(MSS_{18} \sharp MSS_{18})$ is
isomorphic to $\Pi_1^{18}(MSS_{18})$. This
assertion would be redundant in light of
Lemma~3.3(1) of~\cite{Han07}, which (as corrected above
at Proposition~\ref{Pi1MSS18}) says
$\Pi_1^{18}(MSS_{18})$ is trivial, and Theorem~3.4(2) of~\cite{Han07} (as corrected above at Theorem~\ref{MSS6sharpSq}), which
says that $\Pi_1^{18}(MSS_{18} \sharp MSS_{18})$
is a trivial group.

Again interpreting ``$SS_{18}$'' as ``$MSS_{18}$'', then Theorem~3.4(4) 
of~\cite{Han07} would say that
$\Pi_1^{18}(MSS_{18}' \sharp MSS_{18})$ is isomorphic to
$\Pi_1^{18}(MSS_{18})$. This  follows since each is trivial. That $\Pi_1^{18}(MSS_{18})$ is trivial is shown
above in Proposition~\ref{correct-3-3-1}. That
$\Pi_1^{18}(MSS_{18}' \sharp MSS_{18})$ is trivial
follows from an argument similar to that used to prove
Theorem~\ref{MSS6sharpSq}, or by
observing that $MSS_{18}' \sharp MSS_{18}$ is (18,18)-isomorphic to
$MSS_{18}$.

Theorem~3.4(5) of~\cite{Han07} says that
$\Pi_1^k(MSS_k' \sharp MSS_k')$ is a
trivial group for $k \in \{18,26\}$. The
assertion is correct, and the argument given
in proof is basically correct; its only
flaw is in its dependence on the incorrectly
proven Lemma~3.3(2) of~\cite{Han07}. Since 
our Proposition~\ref{MSS18'summary} 
gives a correct proof of Lemma~3.3(2)
of~\cite{Han07}, we can accept the assertion
of Theorem~3.4(5) of~\cite{Han07}.

Theorem~3.4(6) of~\cite{Han07} makes an assertion
about $\Pi_1^{18}(SS_{26})$.
However, $SS_{26}$ isn't defined in~\cite{Han07}. If we interpret ``$SS_{26}$'' as ``$MSS_{26}'$'' then Han's Theorem~3.4(6) would say that 
$\Pi_1^{26}(MSS_{26}' \sharp MSS_{26}')$ is
isomorphic to $\Pi_1^{26}(MSS_{26}')$. This assertion can be correctly proven by observing
that 
$(MSS_{26}' \sharp MSS_{26})$
is $(26,26)$-isomorphic to
$MSS_{26}'$.

\section{Fundamental groups for images without holes}
\label{holes-sec}
In~\cite{Han07}, attempts are made to derive fundamental 
groups for certain digital surfaces without holes.
Errors in these efforts are discussed in this section. We also
obtain some related original results.

%In \cite{Han07}, Han gives a definition for ``no holes'' in a digital image. 

\begin{definition}
\rm{\cite{Han07}}
\label{hanHole}
A digital image $(X,\kappa)$ {\em has no 
$\kappa$-hole} if every $\kappa$-path in $X$ 
is $\kappa$-nullhomotopic in $X$.
\end{definition}

In the definition above, we must understand ``path'' in the sense of Definition~\ref{pathPtSet}, as any path in the sense of Definition~\ref{dig-loop} is nullhomotopic. Recall from Definition~\ref{nullhomotopy} that a path in the sense of Definition \ref{pathPtSet} is nullhomotopic when its inclusion map is nullhomotopic. We show below that, in the case where each component of $X$ is finite, the no hole condition is equivalent to contractibility of each component.

Using Definition~\ref{hanHole}, it is claimed as Theorem~3.5 
of~\cite{Han07} that a closed
$k$-surface $X \subset \Z^3$ with no
$k$-holes has trivial fundamental group for
$k \in \{18,26\}$. However, the argument given
fails to require homotopies between
loops to hold the endpoints fixed.

By Definition~\ref{htpy-2nd-def}, a condition that
is necessary for a connected image $X$ to be
contractible or to have
no holes is that $X$ must have a finite upper bound for lengths of 
shortest paths between distinct points, since there
are finitely many ``time steps'' in a homotopy. We will use the following.

\begin{prop}
\label{connected-path}
Let $(X,\kappa)$ be a digital image. Then $X$ is finite and connected if and only if $X$ is a $\kappa$-path.
\end{prop}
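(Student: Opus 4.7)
The plan is to handle the two directions separately, and in each case reduce to a concatenation/continuity argument.

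For the easier ($\Leftarrow$) direction, suppose $X$ is a $\kappa$-path, i.e.\ there is a $(2,\kappa)$-continuous surjection $f:[0,m]_\Z \to X$. Since $[0,m]_\Z$ is finite, so is its image $X$. Since $[0,m]_\Z$ is $2$-connected and $f$ is $(2,\kappa)$-continuous, Definition~\ref{continuous} directly gives that $X = f([0,m]_\Z)$ is $\kappa$-connected.

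For the ($\Rightarrow$) direction, assume $X$ is finite and $\kappa$-connected; enumerate $X = \{x_0, x_1, \dots, x_n\}$ in any order. Using $\kappa$-connectedness of $X$, for each $i \in \{0,\dots,n-1\}$ pick a $\kappa$-path $P_i$ in $X$ from $x_i$ to $x_{i+1}$, which exists by Definition~\ref{pathPtSet}; regard $P_i$ as a $(2,\kappa)$-continuous function $f_i : [0,m_i]_\Z \to X$ with $f_i(0)=x_i$ and $f_i(m_i)=x_{i+1}$. Form the product $f = f_0 \cdot f_1 \cdots f_{n-1}$ using the path product recalled in Section~2.3; iterated application of that construction gives a $(2,\kappa)$-continuous function $f : [0,m]_\Z \to X$ (where $m = m_0 + \cdots + m_{n-1}$) whose image contains every $x_i$ and is therefore all of $X$. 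Thus $X$ is a $\kappa$-path.

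The argument is essentially routine; the only thing to be careful about is the interpretation of ``$X$ is a $\kappa$-path,'' which I am reading (consistently with Definition~\ref{dig-loop}) as the existence of a $(2,\kappa)$-continuous surjection from some digital interval $[0,m]_\Z$ onto $X$. With that reading, the main ``obstacle'' is purely bookkeeping: verifying that the iterated product of finitely many $\kappa$-paths is again a $(2,\kappa)$-continuous function on a digital interval, which follows immediately from the definition of the product in Section~2.3.
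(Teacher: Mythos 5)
Your proposal is correct and follows essentially the same argument as the paper: enumerate the finitely many points of $X$, join consecutive ones by $\kappa$-paths using connectedness, and concatenate to obtain a single path covering $X$, with the converse direction being immediate. The only cosmetic difference is that you phrase the concatenation via the formal path-product of Section~2.3 on functional paths, whereas the paper works with paths as point sequences and simply takes their union; both amount to the same bookkeeping.
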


\begin{proof} 
First assume that $X$ is finite and connected. 
Let $X=\{x_i\}_{i=0}^m$. Since
$X$ is connected, there is a path
$P_i$ in $X$ from $x_{i-1}$ to $x_i$,
$i \in \{1,2,\ldots,m\}$. By traversing
$P_1$ followed by $P_2$ followed by $\ldots$
followed by $P_m$, we see that $X$ is the
path $\bigcup_{i=1}^m P_i$.

The converse is clear from the definition of path and connectivity - any path must be finite and connected.
\end{proof}

%(The converse below is true even without the finiteness assumption.)
%Notice that if $X$ is connected and finite, then there is an
%upper bound on the length of
%shortest paths between members of
%$X$. We have the following.
\begin{prop}
\label{noHoles-Contract}
Let $(X,\kappa)$ be a digital image such that each component is finite.
Then $X$ has no $\kappa$-hole if and
      only if every component of $X$ is 
      $\kappa$-contractible.
\end{prop}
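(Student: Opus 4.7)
The plan is to prove both implications by exploiting the basic observation that a $\kappa$-homotopy whose initial map lands in a single component $C$ of $X$ must remain in $C$ throughout. Once this is in hand, each direction becomes a direct application of either Proposition~\ref{connected-path} or a restriction argument.

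For the ``only if'' direction, I would fix a component $C$ of $X$. Since $C$ is finite and connected, Proposition~\ref{connected-path} tells us that $C$ is itself a $\kappa$-path in $X$, so the no-hole hypothesis yields a homotopy $F: C \times [0,m]_{\Z} \to X$ with $F(x,0)=x$ and $F(x,m)=c$ for some $c\in X$. For each $x\in C$ the induced map $F_x: [0,m]_{\Z}\to X$ is a $(2,\kappa)$-continuous path starting at $x$, so its image is a $\kappa$-connected subset of $X$ that contains $x$; since $C$ is the $\kappa$-component of $x$, this image lies in $C$. Consequently $F$ actually takes values in $C$, and it exhibits $1_C$ as $(\kappa,\kappa)$-homotopic to the constant map $\overline{c}$, so $C$ is $\kappa$-contractible.

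For the ``if'' direction, suppose every component of $X$ is $\kappa$-contractible and let $P\subset X$ be a $\kappa$-path in the sense of Definition~\ref{pathPtSet}. Since $P$ is $\kappa$-connected, it is contained in some component $C$ of $X$; choose a contraction $F: C\times [0,m]_{\Z}\to C$ with $F(x,0)=x$ and $F(x,m)=c$. Then the restriction $F|_{P\times [0,m]_{\Z}}$ takes values in $C\subset X$, agrees with the inclusion $P\hookrightarrow X$ at $t=0$, and equals the constant map $\overline{c}$ at $t=m$; continuity of the slices $F_x$ and $F_t$ is preserved under restriction, so this restricted map is a nullhomotopy of the inclusion $P\to X$. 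Hence $X$ has no $\kappa$-hole.

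The only nontrivial step in either direction is the assertion that the homotopy $F$ in the ``only if'' direction must stay inside $C$; this is the main obstacle, but it is immediate once one notes that the time-track $F_x$ is a connected path-image meeting $C$ at $x$. The remainder of the argument is formal, using only restriction of homotopies and Proposition~\ref{connected-path}. Note that finiteness of components is needed only to invoke Proposition~\ref{connected-path} in the ``only if'' direction.
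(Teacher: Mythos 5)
Your proof is correct and follows essentially the same route as the paper's: the forward direction uses Proposition~\ref{connected-path} to view a finite component as a $\kappa$-path and then invokes the no-hole hypothesis, and the converse restricts a contraction of the containing component to the given path. You are in fact slightly more careful than the paper, which passes without comment from ``the inclusion $A \to X$ is nullhomotopic in $X$'' to ``$A$ is contractible''; your time-track observation that each $F_x$ has connected image meeting $A$ supplies exactly the justification that step needs.
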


\begin{proof}
Suppose $X$ has no $\kappa$-hole. Let $A$ be
a $\kappa$-component of $X$. Then $A$ is
finite, and by
Proposition~\ref{connected-path}, $A$ is a 
$\kappa$-path. Since $X$ has no $\kappa$-hole, the inclusion $i:A\to X$ is nullhomotopic in~$A$, and thus $A$ is contractible.

Conversely, suppose every component of
$X$ is $\kappa$-contractible.  Since every
path $P \subset X$ is a connected set, we
must have $P$ contained in some component
$A$ of $X$. By restricting a contraction of
$A$ to $P$, we have a nullhomotopy of $P$ in
$X$. Thus, $X$ has no $\kappa$-hole.
\end{proof}

The importance of the finiteness restriction in Proposition~\ref{noHoles-Contract} is demonstrated in the following example.

\begin{exl}
\label{infinite}
$\Z$ has no 2-hole, but is connected and not 2-contractible.
\end{exl}

\begin{proof} Let $P=\{y_i\}_{i=0}^m$ be a 2-path in $\Z$, in the sense of Definition~\ref{pathPtSet}. Then $P$ is a digital interval: $P=[a,b]_{\Z}$. Therefore, the function
$H: P \times [0,b-a]_{\Z} \to \Z$ defined by
\[ H(s,t) = p(\max\{0,s-t\})
\]
is a nullhomotopy of $P$. It follows that $\Z$ has no 2-holes.

Clearly, $\Z$ is 2-connected.
$\Z$ is not 2-contractible, as given
$x \in \Z$, there is no finite bound
on the length of 2-paths from $y \in \Z$ to $x$, and a homotopy has only
finitely many steps in which the distance between points can be lessened by at most 2.
\end{proof}

The following is our modified and corrected version of Theorem~3.5 of~\cite{Han07}. The result in that paper is stated only for closed digital surfaces (which are automatically finite), but our theorem holds more generally for any digital image with finite components.

\begin{thm}\label{noholepi1}
If $(X,\kappa)$ has no $\kappa$-hole and each component of $X$ is finite, then $\Pi_1^\kappa(X,x_0)$ is trivial for any $x_0$. 
\end{thm}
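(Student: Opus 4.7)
The plan is a short reduction to the two key results already proved in the paper: Proposition \ref{noHoles-Contract} and Corollary \ref{contractible-trivial}.

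First I would restrict attention to the $\kappa$-component $C$ of $X$ containing $x_0$. The key observation is that $\Pi_1^\kappa(X,x_0)$ and $\Pi_1^\kappa(C,x_0)$ may be identified in a natural way. Indeed, any $\kappa$-loop $f:[0,m]_{\Z}\to X$ based at $x_0$ has connected image (the continuous image of the connected set $[0,m]_{\Z}$), which meets $C$ at $x_0$, so the image lies entirely in $C$. The same connectedness argument applied to the product $[0,m]_{\Z}\times[0,M]_{\Z}$ shows that any loop-preserving homotopy $H$ between such loops has image in $C$ as well. Hence every loop class representative and every witness to loop equivalence used to define $\Pi_1^\kappa(X,x_0)$ can be taken inside $C$.

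Next, $C$ is finite by hypothesis, so Proposition \ref{noHoles-Contract} applies: since $X$ has no $\kappa$-hole, every component of $X$ is $\kappa$-contractible, and in particular $C$ is $\kappa$-contractible.

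Finally, Corollary \ref{contractible-trivial} gives that $\Pi_1^\kappa(C,x_0)$ is trivial, which via the identification above yields $\Pi_1^\kappa(X,x_0)$ is trivial. The only step requiring a moment's thought is the basepoint-component reduction, but this is routine and does not require pointed hypotheses, since we already have Corollary \ref{contractible-trivial} available for the (unpointed) notion of contractibility.
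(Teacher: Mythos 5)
Your proof is correct and follows essentially the same route as the paper's: reduce to the component of $x_0$ (where the identification $\Pi_1^\kappa(X,x_0)=\Pi_1^\kappa(C,x_0)$ is used, which you justify in more detail than the paper does), then apply Proposition~\ref{noHoles-Contract} to get contractibility of that component, and conclude via Corollary~\ref{contractible-trivial}.
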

\begin{proof}
Let $x_0 \in X$. By Proposition~\ref{noHoles-Contract},
the component $A$ of $x_0$ in $X$ is contractible. It
follows from Corollary~\ref{contractible-trivial} that
$\Pi_1^{\kappa}(X,x_0)=\Pi_1^{\kappa}(A,x_0)$ is trivial.
\end{proof}

%In \cite{Han07}, Han gives a definition for ``no holes'' in a digital image. Say that a homotopy $H:[0,m]_\Z\times [0,k]_\Z \to X$ of loops is \emph{loop preserving} (see \cite{Boxer99}) if $f(x) = H(x,t)$ is a loop for each $t$. 
%\begin{definition}
%\rm{(see \cite{Han07})}
%\label{hanHole}
%A digital image $(X,\kappa)$ {\em has no 
%$\kappa$-hole} if every $\kappa$-loop in $X$ 
%is $\kappa$-nullhomotopic in $X$ by a loop-preserving homotopy.
%\end{definition}
%
%The loop-preserving condition is omitted in \cite{Han07}, but it is necessary. Otherwise any loop, being a path, can trivially be contracted to its starting point by a homotopy. Because  the definition used in \cite{Han07} is automatically satisfied for all digital images,
%\todo{No. Han's definition is about a set, not a function.}
%the results which derived in that section of the paper cannot be true as stated. We will attempt to derive the same results from that paper using Definition \ref{hanHole}, in particular that images with no holes have trivial fundamental groups. (Note that the homotopies in Definition \ref{hanHole} are not required to be pointed- if they were, then the fundamental group would obviously be trivial.)

A closely related version of the ``no hole'' condition can be formulated in terms of paths viewed as functions according to Definition \ref{dig-loop}.
%Say that a homotopy $H:[0,m]_\Z\times [0,k]_\Z \to X$ of loops is \emph{loop preserving} (see \cite{Boxer99}) if $f(x) = H(x,t)$ is a loop for each $t$. 

\begin{definition}
\label{noloophole}
A digital image $(X,\kappa)$ {\em has no 
loophole} if every $\kappa$-loop in $X$ 
is $\kappa$-nullhomotopic in $X$ by a loop-preserving homotopy.
\end{definition}

As with Han's ``no hole'' condition, we can show that a space with no loopholes has trivial fundamental group. The following is another version of Theorem~3.5 of~\cite{Han07}.

\begin{thm}\label{noloopholepi1}
If $(X,\kappa)$ has no loopholes, then $\Pi_1^\kappa(X,x_0)$ is trivial for any $x_0$. 
\end{thm}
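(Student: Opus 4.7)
Let $f:[0,m]_\Z \to X$ be a $\kappa$-loop based at $x_0$. The no-loophole hypothesis provides a loop-preserving homotopy $H:[0,m]_\Z\times[0,M]_\Z\to X$ with $H(s,0)=f(s)$ and $H(s,M)=y$ for some fixed $y\in X$. Set $\gamma(t):=H(0,t)=H(m,t)$; then $\gamma:[0,M]_\Z\to X$ is a $\kappa$-path from $x_0$ to $y$ traced by the drifting basepoint of $H$. The plan is to show
\[ [f] \;=\; [\gamma\cdot\overline{y}\cdot\gamma^{-1}] \;=\; [\gamma\cdot\gamma^{-1}] \;=\; [\overline{x_0}] \]
in $\Pi_1^\kappa(X,x_0)$, where $\gamma^{-1}(t):=\gamma(M-t)$. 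Intuitively, we correct for the drifting basepoint by conjugating the ``constant-loop'' end of $H$ with $\gamma$.

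For the first equality I would define $G:[0,2M+m]_\Z\times[0,M]_\Z\to X$ by
\[
G(s,t)=\begin{cases}
\gamma(\min(s,t)) & \text{if } 0\le s\le M,\\
H(s-M,t) & \text{if } M\le s\le M+m,\\
\gamma(\min(2M+m-s,t)) & \text{if } M+m\le s\le 2M+m.
\end{cases}
\]
At fixed $t$, this loop walks from $x_0$ along $\gamma$ until it reaches $\gamma(t)$ (then rests there), traverses $H(\cdot,t)$ as a loop based at $\gamma(t)$, and then retraces $\gamma$ back to $x_0$. At $t=0$ the outer pieces both collapse to $x_0$ while the middle piece is $f$, so $G(\cdot,0)$ is a trivial extension of $f$; at $t=M$ the outer pieces are exactly $\gamma$ and $\gamma^{-1}$ and the middle piece is $\overline{y}$, so $G(\cdot,M)=\gamma\cdot\overline{y}\cdot\gamma^{-1}$. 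Evaluating at $s=0$ and $s=2M+m$ gives $G(0,t)=G(2M+m,t)=\gamma(0)=x_0$, so $G$ holds both endpoints fixed, establishing the first equality of loop classes.

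For the remaining equalities, I would observe that $\gamma\cdot\overline{y}\cdot\gamma^{-1}$ is a trivial extension of $\gamma\cdot\gamma^{-1}$ per Definition~\ref{triv-extension} (inserting the trivial loop $\overline{y}$ between $\gamma$ and $\gamma^{-1}$), so the two share a loop class. Then a ``rooftop'' collapse $K:[0,2M]_\Z\times[0,M]_\Z\to X$ given by $K(s,t)=\gamma(\min(s,2M-s,M-t))$ is a basepoint-fixing $\kappa$-homotopy from $\gamma\cdot\gamma^{-1}$ (at $t=0$) to $\overline{x_0}$ (at $t=M$), since on each step in $s$ or $t$ the $\gamma$-parameter changes by at most one. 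Chaining the three equalities yields $[f]=[\overline{x_0}]$. The main obstacle is the continuity bookkeeping for $G$: one must check that the three pieces agree at $s=M$ and $s=M+m$ (they do, because $H(0,t)=H(m,t)=\gamma(t)$), and that each slice $G_t$ is $(\kappa,\kappa)$-continuous while each track $G_s$ is $(2,\kappa)$-continuous, where the $\min(s,t)$ truncation is precisely the device that keeps a single unit step in $t$ legal on the outer pieces.
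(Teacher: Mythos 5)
Your proposal is correct and follows essentially the same route as the paper's proof: both conjugate each stage $H_t$ of the loop-preserving nullhomotopy by the truncated basepoint track $\gamma(\min(\cdot,t))$ to get an endpoint-fixing homotopy from a trivial extension of $f$ to $\gamma\cdot\overline{y}\cdot\gamma^{-1}$, and then collapse $\gamma\cdot\gamma^{-1}$ by the same truncation device. The only (harmless) difference is that you allow the terminal constant loop to sit at an arbitrary point $y$, whereas the paper takes it to be $\overline{x_0}$.
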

\begin{proof}
Let $f:[0,k]_{\Z} \to X$ be a loop in $X$ based at $x_0$. We must show that $[f]=[\bar x_0]$, where $\bar x_0$ is the constant loop at $x_0$. Since $X$ has no loopholes, $f$ is homotopic to $\bar x_0$ by a loop-preserving homotopy, say $H:[0,k]_{\Z}\times [0,m]_{\Z} \to X$. Since $H$ is loop-preserving, we have $H(0,t) = H(k,t)$ for each $t$. Let $p(t) = H(0,t)$, so $p$ is the path taken by the basepoints of the loops during the homotopy. Since both $f$ and $\bar x_0$ have basepoint $x_0$, this path $p$ is a loop at $x_0$. 

For $t \in [0,m]_{\Z}$, let $p_t: [0,m]_{\Z} \to X$ be defined by
$p_t(s) = p(\min\{s,t\})$. Then $p_t$ is a
path from $p(0)=x_0$ to $p(t)$ and
let $p_t^{-1}$ be the reverse path.

Let $\bar x_0$ be a constant path of length $m$. 
Let $\bar H: [0,k+2m]_{\Z} \times [0,m]_{\Z} \to X$ be defined by
\[ \bar H(s,t)=
(p_t \cdot H_t \cdot p_t^{-1})(s), \]
where $H_t(s)=H(s,t)$.
Then $\bar H$ is a homotopy from $p_0 \cdot f \cdot p_0^{-1}$ 
$= \bar x_0 \cdot f \cdot \bar x_0$,
a trivial extension of $f$,
to $p \cdot \bar x_0 \cdot p^{-1}$,
a trivial extension of $p \cdot p^{-1}$, and $\bar H$ holds the endpoints fixed. Therefore,
\begin{equation}
\label{f-forwardandback}
[f] = [p \cdot p^{-1}].
\end{equation}

Since the function
$K: [0,2m]_{\Z} \times [0,m]_{\Z} \to X$ defined by
\[ K(s,t)= (p_t \cdot p_t^{-1})(s)
\]
is a homotopy from $p \cdot p^{-1}$ to $\bar x_0 \cdot \bar x_0$ that holds the endpoints fixed, we have
\begin{equation}
\label{forwardandback-x0}
[p \cdot p^{-1}] = [\bar x_0].
\end{equation}
From equations~(\ref{f-forwardandback}) and~(\ref{forwardandback-x0}), we have $[f]=[\bar x_0]$, as desired.
\end{proof}

In the proof of Theorem~\ref{noloopholepi1} we also proved the following.
\begin{lem}
\label{tricky}
Let $f$ be a loop in $X$ based at $x_0$. If $f$ is homotopic to the constant loop $\bar x_0$ by a loop preserving homotopy, then $[f]=[\bar x_0]$ in $\Pi_1^\kappa(X,x_0)$. $\Box$
\end{lem}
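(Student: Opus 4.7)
The plan is essentially to observe that this lemma is a direct abstraction of the argument already carried out inside the proof of Theorem~\ref{noloopholepi1}: nowhere in that argument did we actually use the hypothesis that \emph{every} loop in $X$ was nullhomotopic by a loop-preserving homotopy. We only used that the particular loop $f$ in question admitted such a nullhomotopy. So the strategy is to re-present the construction from that proof verbatim, but starting directly from the hypothesized loop-preserving homotopy.

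Concretely, I would proceed as follows. Given a loop-preserving homotopy $H:[0,k]_{\Z}\times[0,m]_{\Z}\to X$ from $f$ to $\overline{x_0}$, define the ``basepoint track'' $p(t)=H(0,t)=H(k,t)$. Since $H(0,0)=f(0)=x_0$ and $H(0,m)=\overline{x_0}(0)=x_0$, the function $p$ is a loop at $x_0$. Then, for each time $t$, let $p_t$ be the initial segment of $p$ up to time $t$ (formally, $p_t(s)=p(\min\{s,t\})$), and let $p_t^{-1}$ be its reverse. The key step is to conjugate $H_t$ by $p_t$: define $\bar H(s,t)=(p_t\cdot H_t\cdot p_t^{-1})(s)$ on $[0,k+2m]_{\Z}\times[0,m]_{\Z}$. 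One checks that $\bar H$ is continuous, holds the endpoints at $x_0$ throughout, starts at a trivial extension of $f$ (because $p_0$ is constant), and ends at a trivial extension of $p\cdot p^{-1}$. This yields the identity $[f]=[p\cdot p^{-1}]$ in $\Pi_1^{\kappa}(X,x_0)$.

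The second half of the plan is to kill $[p\cdot p^{-1}]$. For this, use the ``retracting'' homotopy $K(s,t)=(p_t\cdot p_t^{-1})(s)$ on $[0,2m]_{\Z}\times[0,m]_{\Z}$, which interpolates between $p\cdot p^{-1}$ and a constant path at $x_0$ while keeping both endpoints fixed at $x_0$ (since $p_t(0)=p_t^{-1}(2m)=x_0$ for all $t$). Combining the two identities gives $[f]=[\overline{x_0}]$.

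The only real content is verifying that $\bar H$ and $K$ are genuine $\kappa$-homotopies that hold the endpoints fixed, and this is just a careful unwinding of the piecewise definitions of $p_t\cdot H_t\cdot p_t^{-1}$ and $p_t\cdot p_t^{-1}$. The conceptual obstacle — if there is one — is merely bookkeeping: making sure the conjugation construction is continuous in $t$, which is immediate once one notes that $p_t$ and $p_{t+1}$ differ by at most the single additional adjacency step from $p(t)$ to $p(t+1)$. Since the construction is essentially lifted verbatim from the proof of Theorem~\ref{noloopholepi1}, the cleanest write-up is a one-line proof that refers back to that argument.
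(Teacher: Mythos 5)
Your proposal is correct and is exactly the paper's approach: the paper proves Lemma~\ref{tricky} by observing that the argument in the proof of Theorem~\ref{noloopholepi1} only uses the loop-preserving nullhomotopy of the single loop $f$, and your write-up reproduces that same conjugation construction with $\bar H$ and $K$. Nothing further is needed.
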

Note that Lemma~\ref{tricky} need not be true when the constant loop $\bar x_0$ is replaced by some other loop. See the discussion following Definition 2.8 of \cite{Boxer06b} for an example of two loops that are homotopic by a loop-preserving homotopy but are not equivalent in the fundamental group.

\begin{figure}
\[
\begin{tikzpicture} 
%
%   %\y\z\x

%background lines black first, then we'll cover them up with faces
\draw[] (0,0,4) -- (0,0,0) -- (4,0,0);
\draw[] (0,0,0) -- (0,4,0);

%top face 
\fill[white] (0,4,0) -- (4,4,0) -- (4,4,4) -- (0,4,4) -- cycle;
\draw (0,4,0) -- (4,4,0) -- (4,4,4) -- (0,4,4) -- cycle;
%right face
\fill[white] (4,4,0) -- (4,0,0) -- (4,0,4) -- (4,4,4) -- cycle;
\draw (4,4,0) -- (4,0,0) -- (4,0,4) -- (4,4,4) -- cycle;
%front face in 3 pieces
\fill[white] (0,0,4) -- (4,0,4) -- (4,1,4) -- (0,1,4) -- cycle;
\fill[white] (4,0,4) -- (4,4,4) -- (3,4,4) -- (3,0,4) -- cycle;
\fill[white] (0,4,4) -- (0,3,4) -- (4,3,4) -- (4,4,4) -- cycle;
\draw (0,0,4) -- (4,0,4) -- (4,4,4) -- (0,4,4) -- cycle;

% retrace background lines dotted
\draw[densely dotted] (0,0,4) -- (0,0,0) -- (4,0,0);
\draw[densely dotted] (0,0,0) -- (0,4,0);

%front hole
\draw (1,1,4) rectangle (3,3,4);

%nodes
%top & right faces
\foreach \i in {0,...,4} {
  \foreach \j in {0,...,4} {
    \node[vertex] at (\i,4,\j) {};
    \node[vertex] at (4,\i,\j) {};
    }
  }
% front face
\foreach \y in {0,...,3} {
  \foreach \z in {0,1,3} {
    \node[vertex] at (\y,\z,4) {};
    }
  }
\foreach \y in {0,1,3} {
  \node[vertex] at (\y,2,4) {};
  }

\end{tikzpicture}
\]
\caption{A schematic of the image used in Example \ref{pi1trivial-with-loophole}. (Dots have been omitted for points on 3 sides of $X$.) The loop circling the ``hole'' on the front face is not contractible by a loop-preserving homotopy, but a trivial extension is pointed contractible.\label{loopholefig}}
\end{figure}
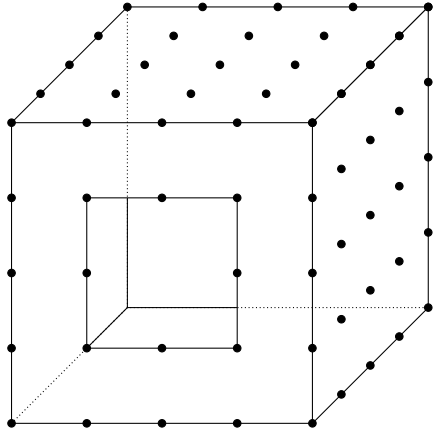

The converses of Theorem \ref{noloopholepi1} and Lemma~\ref{tricky} are not true. The following example shows an image with a loophole and trivial fundamental group.
\begin{exl}\label{pi1trivial-with-loophole}
Let $(X,6) \subset (\Z^3,6)$ be given by  $X = \delta([0,4]^3_\Z) \setminus \{(4,2,2)\}$. This image $X$ is analogous to $MSS_6$, but larger, with the center of one ``side'' deleted. A schematic of this image is shown in Figure \ref{loopholefig}. Let $f$ be the 8 point loop in $X$ which circles the deleted point $(4,2,2)$. 

By Theorem 3.12 of \cite{Staecker-etal}, the only loops homotopic to $f$ by loop-preserving homotopies are rotations of $f$. Thus $f$ does not contract by a loop-preserving homotopy, and so $X$ has a loophole. But simple modifications to the arguments used in Section \ref{han-3-3} will show that $\Pi_1^6(X,x_0)$ is trivial for any $x_0 \in X$. $\Box$
\end{exl}

The no loophole condition and the no hole condition are closely related, but not equivalent. Under a the same finiteness condition used above, ``no loophole'' is weaker than ``no hole'':
\begin{prop}\label{ccnohole}
Let $X$ be a digital image such that each component of $X$ is finite. If $X$ has no hole, then $X$ has no loophole. 
\end{prop}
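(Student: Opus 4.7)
The plan is to chain together Proposition~\ref{noHoles-Contract} and a straightforward composition of a loop with a contraction. Since each component of $X$ is finite and $X$ has no $\kappa$-hole, Proposition~\ref{noHoles-Contract} gives that every component of $X$ is $\kappa$-contractible. So it suffices to show: if every component of $X$ is contractible, then every $\kappa$-loop in $X$ is nullhomotopic by a loop-preserving homotopy.

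To carry this out, let $f: [0,m]_{\Z} \to X$ be a $\kappa$-loop with $f(0) = f(m) = x_0$. The image of $f$ lies in the $\kappa$-component $A$ of $x_0$, because $f$ is continuous and its domain is connected. By contractibility of $A$, there is a $\kappa$-homotopy $F: A \times [0,M]_{\Z} \to A$ with $F(a,0) = a$ for all $a \in A$ and $F(a,M) = p$ for some $p \in A$. Define $G: [0,m]_{\Z} \times [0,M]_{\Z} \to X$ by $G(s,t) = F(f(s),t)$. Continuity of $G$ follows at once from continuity of $f$ and of $F$ in each variable, and $G(\cdot,0) = f$ while $G(\cdot,M) = \overline{p}$ is the constant loop at $p$.

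To verify that $G$ is loop-preserving in the sense of Definition~\ref{holds-fixed}, observe that
\[
G(0,t) = F(f(0),t) = F(x_0,t) = F(f(m),t) = G(m,t)
\]
for every $t \in [0,M]_{\Z}$, using only that $f(0) = f(m)$. Thus $f$ is nullhomotopic in $X$ by a loop-preserving homotopy, and $X$ has no loophole.

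There is no real obstacle here: the substantive content has already been packaged into Proposition~\ref{noHoles-Contract}, and once contractibility of each component is available, the composition $F \circ (f \times \id)$ supplies the required loop-preserving nullhomotopy automatically.
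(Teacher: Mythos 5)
Your proof is correct and follows essentially the same route as the paper's: apply Proposition~\ref{noHoles-Contract} to get contractibility of the relevant component, then compose the loop with the contraction and observe that $f(0)=f(m)$ makes the resulting homotopy loop-preserving. No substantive differences.
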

\begin{proof}
Let $f:[0,m]_\Z \to X$ be a loop in $X$, and we will show that it is nullhomotopic by a loop-preserving homotopy. Let $A\subset X$ be the component of $X$ containing the points of the path $f$. Since $X$ has no hole and the components of $X$ are finite, Proposition \ref{noHoles-Contract} shows that $A$ is contractible. Let $G:A\times[0,k]_\Z \to X$ be a contraction of $A$, say $G(x,k)=a_0$ for all $x\in A$. 

Then define $H:[0,m]_\Z \times [0,k]_\Z \to X$ as $H(t,s) = G(f(t),s)$. Being a composition of continuous functions, $H$ has the necessary continuity properties to be a homotopy from $f$ to $\bar a_0$, the constant path at $a_0$. Furthermore $H$ is loop preserving since, for any $s$ we have:
\[ H(0,s) = G(f(0),s) = G(f(m),s) = H(m,s) \]
and thus each stage of $H$ is a loop. Thus $f$ is nullhomotopic by a loop-preserving homotopy as desired.
\end{proof}

The converse to Proposition \ref{ccnohole} is false, as shown by the following example.

\begin{exl}
\label{loopholeNoHole}
$(MSS_6,6)$ has no loopholes, but has a hole.
\end{exl}

\begin{proof}
The proof of Proposition~\ref{correct-3-3-1} is easily modified to show that $(MSS_6,6)$ has no loophole.

$MSS_6$ is finite, connected, and not contractible~\cite{Boxer94}. It follows from Proposition~\ref{noHoles-Contract} that $MSS_6$ has a hole.
\end{proof}

It is claimed, as Theorem~3.6
of~\cite{Han07}, that if $X$ and $Y$ are
digital surfaces in $\Z^3$ with no
$k$-holes, $k \in \{18,26\}$, then
$\Pi_1^k(X \sharp Y)$ is a trivial group. The
argument given 
%in ``proof'' of this claim 
depends on Theorem~3.5
of~\cite{Han07}, the flaws in which are
discussed above. Although our Theorem~\ref{noholepi1}
could be used to overcome this deficiency,
the argument for Theorem~3.6
of~\cite{Han07} also claims without proof or citation that 
$X \sharp Y$ has no $k$-holes. We 
neither have a proof nor a counterexample for this assertion
at the current writing. Thus, Theorem~3.6
of~\cite{Han07} must be regarded as unproven, and we state as open questions:
\begin{question}
If $X$ and $Y$ are digital surfaces in $\Z^3$ with no $k$-holes, is $\Pi_1^k(X\sharp Y)$ trivial?
\end{question}

\begin{question}
If $X$ and $Y$ are digital surfaces in $\Z^3$ with no $k$-holes, does $X\sharp Y$ have no
$k$-holes?
\end{question}

\begin{question}
If $X$ and $Y$ are digital surfaces in $\Z^3$ with no $k$-loopholes, does $X\sharp Y$ have no
$k$-loopholes?
\end{question}

\section{Euler characteristic}
In this section, we correct and extend several statements that
appear in Section~5 of~\cite{Han07} 
concerning the Euler characteristic
$\chi(X)$ of a digital image $X$. Some of the errors in \cite{Han07}
were previously noted in~\cite{BKO}; they are recalled 
here for completeness.

A digital image $X$ can be considered to be
a graph. When $X$ is finite, let $V=V(X)$ be the number
of vertices, i.e., the number of distinct
points of $X$; let $E=E(X)$ be the number of
distinct edges of $X$, where an edge
is given by each adjacent pair of points;
and let $F=F(X)$ be the number of distinct faces, 
where a face is an unordered
triple of distinct vertices each pair of
which is adjacent. More generally,
a {\em $k$-simplex in $X$ of dimension $d$} is a set of $d+1$ distinct members of $X$, each pair of which is $k$-adjacent.

The definition of the Euler characteristic in~\cite{Han06} is
\[ \chi(X)=V-E+F.\]
This definition is satisfactory
if $X$ has no simplices of dimension greater than 2. However,
the latter assumption is not always
correct, even for digital surfaces; e.g., $MSC_8^*$ has
3-simplices.
Thus, a better definition of the
Euler characteristic is that of~\cite{BKO}:
\[ \chi(X)= \chi(X,k)=\sum_{q=0}^m (-1)^q \alpha_q, \]
where $m$ is the largest integer $d$ such that $(X,k)$ has a simplex of dimension~$d$ and $\alpha_q$ is 
the number of distinct $q$-dimensional $k$-simplices in $X$.

At statement~(5.1) of~\cite{Han07}, 
it is inferred that, using 18-adjacency in $\Z^3$,
\[ V(MSS_{18}) =10,~~E(MSS_{18}) = 20,
   ~~F(MSS_{18}) = 12,
\]
and therefore that $\chi(MSS_{18})=2$. In fact, one sees easily (see Figure~\ref{3surfs}) that
$F(MSS_{18})=8$, namely, the faces are
\begin{align*}
\langle c_0,c_1,c_9\rangle , \langle c_0,c_1,c_6\rangle , \langle c_0,c_5,c_6\rangle , \langle c_0,c_5,c_9\rangle , 
\\
\langle c_2,c_3,c_7\rangle , \langle c_2,c_3,c_8\rangle ,
\langle c_3,c_4,c_7\rangle , \langle c_3,c_4,c_8\rangle ,
\end{align*} 

and therefore, as noted in~\cite{BKO}, we have the following.

\begin{exl}
$\chi(MSS_{18})=-2$.
\end{exl}

Theorem~5.2 of~\cite{Han07} claims that for closed $k$-surfaces $X$ and $Y$,
\[\chi(X \sharp Y)=\chi(X) + \chi(Y) -2. \]
This formula is attractive because it matches the classical formula for the Euler characteristic of a connected sum of surfaces. Unfortunately we will see that the formula holds only in some cases. The argument given in \cite{Han07} makes some counting errors, and fails to count 3-simplices. A correct formula must include the Euler characteristic of $A_k$.

\begin{lem}
\label{chiSharpFormula}
For closed digital surfaces $X$ and $Y$,
we have
\[ \chi(X\sharp Y) = \chi(X) + \chi(Y) - 2\chi(A_k). \]
\end{lem}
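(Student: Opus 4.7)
The plan is to count simplices of each dimension using inclusion--exclusion on the decomposition of $X$, $Y$, and $X\sharp Y$ along the minimal simple closed curve $B_k = A_k \setminus \mathrm{int}(A_k)$ (the boundary of the digital disk $A_k$). Let $X' = X \setminus \mathrm{int}(A_k)$ and $Y' = Y \setminus \mathrm{int}(A_k)$, where $\mathrm{int}(A_k)$ is the set that gets excised in forming the connected sum. Then by construction $X\sharp Y = X' \cup Y'$ with $X' \cap Y' = B_k$, and likewise $X = X' \cup A_k$ with $X' \cap A_k = B_k$ (and analogously for $Y$).

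The key geometric observation, which I would verify from the connected-sum definition in~\cite{Han06}, is a non-straddling property: no $k$-simplex of $X$ has both a vertex in $\mathrm{int}(A_k)$ and a vertex in $X\setminus A_k$, and similarly no $k$-simplex of $X\sharp Y$ uses vertices from both $X'\setminus B_k$ and $Y'\setminus B_k$. Granting this, every $q$-simplex of $X$ lies entirely in $X'$ or entirely in $A_k$, with the overlap being exactly the $q$-simplices of $B_k$. Writing $\alpha_q(\cdot)$ for the count of $q$-simplices, inclusion--exclusion yields
\[
\alpha_q(X) = \alpha_q(X') + \alpha_q(A_k) - \alpha_q(B_k),
\]
\[
\alpha_q(Y) = \alpha_q(Y') + \alpha_q(A_k) - \alpha_q(B_k),
\]
\[
\alpha_q(X\sharp Y) = \alpha_q(X') + \alpha_q(Y') - \alpha_q(B_k).
\]
Taking the alternating sum $\sum_q (-1)^q$ to form Euler characteristics and eliminating $\chi(X')$ and $\chi(Y')$ gives
\[
\chi(X\sharp Y) = \chi(X) + \chi(Y) - 2\chi(A_k) + \chi(B_k).
\]

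It remains to observe that $\chi(B_k) = 0$. Since $B_k$ is a minimal simple closed curve, direct inspection of the three digital disks in Figure~\ref{hanDisks} shows that its boundary cycle consists of $n$ vertices and exactly $n$ edges (each point is $k$-adjacent only to its two cyclic neighbors inside $B_k$), with no higher-dimensional $k$-simplex formed among boundary points; hence $\chi(B_k) = n-n = 0$ and the stated identity follows.

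The main obstacle is the non-straddling claim: one must argue carefully that $\mathrm{int}(A_k)$ is not adjacent to $X\setminus A_k$ across the disk and that the gluing creates no new simplices between $X'$ and $Y'$ away from $B_k$. This depends on the specific geometric realization of $\sharp$ in~\cite{Han06} and should be checked case by case for each of $MSC^*_8$, $MSC'^*_8$, and $MSC^*_4$. Once that is established the remainder is routine simplicial bookkeeping.
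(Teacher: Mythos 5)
Your proposal is correct and takes essentially the same route as the paper: the same non-straddling observation (no simplex of $X$ or $Y$ meets both the interior and the exterior of $A_k$), the same dimension-by-dimension simplex count (yours organized as inclusion--exclusion through the intermediate pieces $X'$ and $Y'$, the paper's written directly as $\alpha_q(X\sharp Y)=\alpha_q(X)+\alpha_q(Y)-2\alpha_q(A_k)+\alpha_q(\delta(A_k))$), and the same final step that $\chi(\delta(A_k))=0$ because the boundary is a simple cycle in each of the three cases of Figure~\ref{hanDisks}. The algebra eliminating $\chi(X')$ and $\chi(Y')$ reproduces the paper's formula exactly, so no substantive difference remains.
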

\begin{proof}
Recall that $\delta(A_k)$ denotes the boundary of $A_k$. The construction of $X\sharp Y$ can be thought of as deleting $A_k$ from each of $X$ and $Y$, and then reinserting only one copy of $\delta(A_k)$. Because $X$ and $Y$ are digital surfaces with $A_k$ embedded inside, no simplex of $X$ or of $Y$ has vertices in both the interior and exterior of $A_k$. Thus when we delete $A_k$ from each of $X$ and $Y$, this deletes only simplices of $A_k$, and when we reinsert $\delta(A_k)$, this inserts only simplices of $\delta(A_k)$. Thus in each dimension $q$ we have:
\[ \alpha_q(X\sharp Y) = \alpha_q(X) + \alpha_q(Y) - 2 \alpha_q(A_k) + \alpha_q(\delta(A_k)), \]
where $\alpha_q$ is the number of $q$-simplices. Taking the alternating sum above we obtain:
\[ \chi(X\sharp Y) = \chi(X) + \chi(Y) - 2\chi(A_k) + \chi(\delta(A_k)). \]

It remains to show that $\chi(\delta(A_k)) = 0$. We can check easily in Figure \ref{hanDisks} that in each possible case for $A_k$, the boundary $\delta(A_k)$ is a simple cycle of points. Thus $\chi(\delta(A_k)) = 0$ as desired.
\end{proof}

The next result was obtained for
digital surfaces in ~\cite{Han07} 
and generalized in~\cite{BKO}.

\begin{prop}
\label{isoChi}
Isomorphic digital images have 
the same Euler characteristic. $\Box$
\end{prop}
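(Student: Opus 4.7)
The plan is to show that an isomorphism $f:(X,\kappa)\to(Y,\lambda)$ of digital images induces a bijection between $q$-dimensional simplices of $X$ and $q$-dimensional simplices of $Y$ for every $q\ge 0$, from which the equality of Euler characteristics follows immediately from the definition $\chi(X)=\sum_q(-1)^q\alpha_q$.

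First I would recall what an isomorphism of digital images means: $f:X\to Y$ is a bijection such that both $f$ and $f^{-1}$ are continuous in the sense of Definition~\ref{continuous}. By the adjacency characterization of continuity, this is equivalent to the biconditional $x \adj_{\kappa} x' \iff f(x) \adj_{\lambda} f(x')$ for distinct $x,x' \in X$, together with $f$ being a bijection.

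Next I would use this biconditional to transport the simplex structure. A $q$-simplex in $(X,\kappa)$ is a set of $q+1$ distinct points, pairwise $\kappa$-adjacent. If $S = \{x_0,\ldots,x_q\}$ is such a simplex, then because $f$ is injective, $f(S) = \{f(x_0),\ldots,f(x_q)\}$ consists of $q+1$ distinct points, and by the characterization above each pair is $\lambda$-adjacent. Hence $f(S)$ is a $q$-simplex of $Y$. Applying the same argument to $f^{-1}$, every $q$-simplex of $Y$ arises uniquely in this way, so $S \mapsto f(S)$ is a bijection from the $q$-simplices of $X$ to the $q$-simplices of $Y$. Thus $\alpha_q(X,\kappa) = \alpha_q(Y,\lambda)$ for every $q$, and summing with alternating signs gives $\chi(X) = \chi(Y)$.

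The only mild subtlety, and so the main point to treat carefully, is making sure that the bijection on simplices is compatible with the choice of $m$ in the definition of $\chi$: $m$ is the largest dimension in which a simplex exists, so one must observe that $\alpha_q(X)=0$ iff $\alpha_q(Y)=0$, which follows from the bijection. This ensures that the two alternating sums have the same nonzero terms, so the equality of $\chi$ is immediate. No use of homotopy or the fundamental group is needed; the argument is purely combinatorial.
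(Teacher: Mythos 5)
Your argument is correct and is the standard one: an isomorphism preserves adjacency in both directions, hence induces a bijection on $q$-simplices for each $q$, so the counts $\alpha_q$ and therefore the alternating sum agree. The paper does not prove this proposition but simply cites it from the literature (\cite{Han07,BKO}), and your combinatorial argument is exactly what those sources rely on, so there is nothing to correct.
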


\begin{exl}
\label{chi-disks}
We have the following.
\begin{itemize}
\item $\chi(MSC_8^*) = 1$.
\item $\chi(MSC_8'^*) = 1$.
\item $\chi(MSC_4^*) = -3$.
\end{itemize}
\end{exl}

\begin{proof} See Figure~\ref{hanDisks}. For
$(MSC_8^*,8)$, we see there are
8 vertices, 17 edges, 12
faces, 2 3-simplices, and no simplices of dimension greater than 3, so
\[ \chi(MSC_8^*)=8-17+12-2=1.
\]

For $(MSC_8'^*,8)$, we see there are
5 vertices, 8 edges, 4 faces, and 
no simplices of dimension greater than 2, so
\[ \chi(MSC_8'^*)=5-8+4=1.
\]

For $(MSC_4^*,4)$, we see there are
9 vertices, 12 edges, and 0
simplices of dimension greater than 1, so
\[ \chi(MSC_4^*)=9-12=-3.
\]
\end{proof}

The computations above immediately give a corrected version of Theorem 5.2 of \cite{Han07}:

\begin{thm}
\label{chiSharp}
For closed digital surfaces $X$ and $Y$,
\[ \chi(X \sharp Y)=
   \left \{ \begin{array}{ll}
   \chi(X)+\chi(Y)-2 & \mbox{ \rm{if} } A_k \approx_{(k,8)} MSC_8^*; \\
   \chi(X)+\chi(Y)-2 & \mbox{ \rm{if} } A_k \approx_{(k,8)} MSC_8'^*; \\
   \chi(X)+\chi(Y)+6 & \mbox{ \rm{if} } A_k \approx_{(k,4)} MSC_4^*.
   \end{array} \right .
\]
\end{thm}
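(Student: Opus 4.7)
The plan is to combine Lemma \ref{chiSharpFormula} with the computations in Example \ref{chi-disks}, using Proposition \ref{isoChi} to pass from the given isomorphism class of $A_k$ to its Euler characteristic. Lemma \ref{chiSharpFormula} reduces the entire theorem to knowing $\chi(A_k)$, so the three cases distinguished in the statement correspond exactly to the three digital disks pictured in Figure \ref{hanDisks}, whose Euler characteristics were just computed.

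First, I would invoke Lemma \ref{chiSharpFormula} to write
\[
\chi(X\sharp Y)=\chi(X)+\chi(Y)-2\chi(A_k).
\]
Next, observe that by Proposition \ref{isoChi}, if $A_k \approx_{(k,8)} MSC_8^*$ then $\chi(A_k)=\chi(MSC_8^*)=1$; if $A_k \approx_{(k,8)} MSC_8'^*$ then $\chi(A_k)=\chi(MSC_8'^*)=1$; and if $A_k \approx_{(k,4)} MSC_4^*$ then $\chi(A_k)=\chi(MSC_4^*)=-3$, where all three values come from Example \ref{chi-disks}.

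Finally, substituting each of these values of $\chi(A_k)$ into the formula from Lemma \ref{chiSharpFormula} yields the three cases: $-2\cdot 1=-2$ in the first two cases, giving $\chi(X)+\chi(Y)-2$, and $-2\cdot(-3)=6$ in the third case, giving $\chi(X)+\chi(Y)+6$. This completes the three cases of the theorem.

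There is essentially no obstacle here, since the hard work has already been done. The main point to be careful about is ensuring we are justified in replacing $\chi(A_k)$ by $\chi$ of the isomorphic model disk — this is exactly what Proposition \ref{isoChi} supplies — and that the three cases listed really are the only possibilities for $A_k$, which is how $A_k$ was set up in the discussion preceding Figure \ref{hanDisks} (the connected sum operation uses one of these three minimal simple closed curves together with its interior).
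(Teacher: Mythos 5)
Your proposal is correct and is essentially the paper's own proof: the paper likewise derives the theorem directly from Lemma~\ref{chiSharpFormula} and the values of $\chi(MSC_8^*)$, $\chi(MSC_8'^*)$, $\chi(MSC_4^*)$ computed in Example~\ref{chi-disks}, with Proposition~\ref{isoChi} implicitly justifying the substitution. Your write-up just makes the substitution step and the case analysis slightly more explicit.
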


\begin{proof} The assertion follows
from Lemma~\ref{chiSharpFormula} and
Example~\ref{chi-disks}.
\end{proof}

Example~5.3 of~\cite{Han07} claims incorrectly that
\[\chi(MSS_{18} \sharp MSS_{18})=\chi(MSS_{18})= 2\]
and that
\[ \chi(MSS_{18}' \sharp MSS_{18}) = 
\chi(MSS_{18}') = 2. \]
Examples~\ref{chiEx} and~\ref{chiSharp} below correct
these errors.

\begin{exl}
\rm{\cite{BKO}}
\label{chiEx}
\begin{itemize}
\item $\chi(MSS_{18} \sharp MSS_{18})=-6$.
\item $\chi(MSS_{18})=-2$.
\item $\chi(MSS_{18}') = 2$. $\Box$
\end{itemize}
\end{exl}

\begin{exl}
\label{chiSharp18'18}
$\chi(MSS_{18}' \sharp MSS_{18})=
  -2$.
\end{exl}

\begin{proof}
Using $A_{18} \approx_{(18,8)} MSC_8'^*$, 
it is easily observed that 
$MSS_{18}' \sharp MSS_{18}$ and
$MSS_{18}$ are 18-isomorphic.
From Proposition~\ref{isoChi},
$\chi(MSS_{18}' \sharp MSS_{18})=
\chi(MSS_{18})$. The assertion
follows from Example~\ref{chiEx}.
\end{proof}

\section{Further remarks}
We have given corrections to many errors that
appear in~\cite{Han07} concerning fundamental groups
and Euler characteristics of 2-sphere-like digital
images. We have also presented some original results related to these ideas, including an example that
shows that contractibility does not imply pointed
contractibility among digital images, and our results concerning
``no loopholes."

\end{document}